\def\card{{{\operatorname{card}}}}
\numberwithin{equation}{section}
\theoremstyle{plain}
\newtheorem{theorem}[equation]{Theorem}
\newtheorem{lemma}[equation]{Lemma}
\newtheorem*{Lemma A}{Lemma A}
\newtheorem*{Lemma B}{Lemma B}
\newtheorem*{Lemma C}{Lemma C}
\theoremstyle{definition}
\theoremstyle{remark}
\begin{document}
\title [A construction of subshifts and a class of semigroups]{A construction of subshifts and \\ a class of semigroups}
\author{Toshihiro Hamachi}
\author{Wolfgang Krieger}
\begin{abstract}
Subshifts with property $(A)$ are constructed from a class of directed graphs. As special cases the Markov-Dyck shifts are shown to have property $(A)$. The 
semigroups, that are associated to $\mathcal R$-graph shifts with Property (A), are determined.
Also results on the reconstruction of $\mathcal R$-graphs from their $\mathcal R$-graph shifts are obtained.
\end{abstract}
\maketitle
%section1

\section{Introduction}
Let $\Sigma$ be a finite alphabet, and let $S$ be the shift 
on the shift space $\Sigma^{\Bbb Z}$,
$$
S((x_{i})_{i \in \Bbb Z}) =  (x_{i+1})_{i \in \Bbb Z}, 
\qquad 
(x_{i})_{i
\in \Bbb Z}  \in \Sigma^{\Bbb Z}.
$$
An 
$S$-invariant closed subset $X$ of $\Sigma^{\Bbb Z}$ is called a subshift. For an introduction to the theory 
of subshifts see \cite {Ki} or  \cite{LM}. 
In \cite {Kr2}
a Property $(A)$ of subshifts was introduced that is an invariant of topological conjugacy. Also in  \cite {Kr2} a semigroup was constructed that is invariantly attached to a subshift with property $(A)$ (see also \cite[Section 9]{CS}).
Prototypes of  subshifts with Property $(A)$ are the Dyck shifts \cite {Kr1}. To recall the construction of the Dyck shifts, let $N> 1$, and let $\alpha^-(n), \alpha^+(n), 0 \leq n < N,$ be the generators of the Dyck inverse monoid (the polycyclic monoid \cite {NP}) $\mathcal D_N$, that satisfy the relations
$$
\alpha^-(n) \alpha^+(m) =
\begin{cases}
1, &\text{if  $n = m$}, \\
0, &\text {if $n \neq m$}.
\end{cases}
$$
The Dyck shifts  are defined as the subshifts
$$
D_N \subset( \{ \alpha^-(n): 0 \leq n < N \} \cup\{ \alpha^+(n):0 \leq n < N \})^\Bbb Z
$$
with the admissible words $(\sigma_i)_{1 \leq i \leq I  } , I \in \Bbb N,$ of $D_N, N > 1,$ given by the condition 
\begin{align*}
\prod_{1 \leq i \leq I } \sigma_i \neq 0. 
\end{align*}
The Dyck inverse monoid $\mathcal D_N$ is associated to the Dyck shift $D_N$.

We denote a finite directed graph  with
vertex set ${\frak P}$ and edge set 
${\mathcal E}$ by $G(\frak P, \mathcal E)$.
As notation for the source vertex and target vertex of an edge or path in a directed graph we use $s$ and $t$.
We recall from \cite{Kr4} the notion of  
an $\mathcal R$-graph.
Let there be given 
a finite  directed graph $G(\mathcal V, \mathcal E)$. Assume also given a partition 
$$
\mathcal E = \mathcal E^-  \cup\mathcal E^+.
$$
We set
\begin{align*}
& \mathcal E^- (\frak q,\frak r) = \{ e^- \in  \mathcal E^- : s(e^-) = \frak q,\  t( e^-) =  \frak r \},
\\
& \mathcal E^+(\frak q,\frak r) = \{ e^- \in  \mathcal E^+ : s(e^+) = \frak r,\  t( e^+) =  \frak q \}, \qquad  \frak q,\frak r \in \frak P.
\end{align*}
 We assume that $ \mathcal E^- (\frak q,\frak r)  \neq \emptyset$ if and only if $  \mathcal E^+(\frak q,\frak r) \neq \emptyset,  \frak q,\frak r \in \frak P$, and we assume that 
the  directed graph $G(\frak P  ,  \mathcal E^-   )$ is strongly connected, or, equivalently, that 
the  directed graph $G(\frak P  ,  \mathcal E^+ )$ is strongly connected.
We call  $G( \frak P ,\mathcal E^- \cup \mathcal E^+ ) $ a partitioned directed graph.
Let there further be given relations \footnote{We consider complete heterogeneous relations.} 
$$
\mathcal R    (\frak q,\frak r) \subset   \mathcal E^- (\frak q,\frak r)   \times   \mathcal E^+(\frak q,\frak r) , \qquad \frak q,\frak r \in \frak P,
$$
and set
$$
\mathcal R = \bigcup_{ \frak q,\frak r \in \frak P} \mathcal R    (\frak q,\frak r) .
$$
The resulting structure, that we call an $\mathcal R$-graph, we denote by 
$G_\mathcal R(\frak P, \mathcal E^-\cup\mathcal E^+)$. 
We also recall 
the construction of a semigroup (with zero)  $\mathcal S(G_\mathcal R(\frak P,   \mathcal E^-\cup \mathcal E^+   ))$ 
from an $\mathcal R$-graph $G_\mathcal R(\frak P, \mathcal E^-\cup\mathcal E^+)$ as described in \cite {Kr3}.   
The semigroup $\mathcal S(G_\mathcal R(\frak P,   \mathcal E^-\cup \mathcal E^+))$ contains idempotents $\bold 1_{\frak p}, \frak p \in \frak P,$ and  has $\mathcal E$ as a generating set.
Besides $\bold 1_{\frak p}^2 = \bold 1_{\frak p},\frak p \in \frak P$, the defining relations are:
$$
f^- g^+ =\bold 1_{\frak q}, \quad f^- \in  \mathcal E^- (\frak q,\frak r), g^+ \in  \mathcal E^+(\frak q,\frak r) , (  f^- ,  g^+ ) \in \mathcal R    (\frak q,\frak r), \quad \frak q,\frak r \in\frak P,
$$
and
\begin{align*}
&\bold 1_{\frak q} e^- = e^- \bold 1_{\frak r} = e^-, \quad e^- \in  \mathcal E^- (\frak q,\frak r), \\
&\bold 1_{\frak r} e^+ = e^+ \bold 1_{\frak q} = e^+, \quad e^+ \in  \mathcal E^+ (\frak q,\frak r), \quad  \frak q,\frak r \in\frak P,
\end{align*}
$$
f^- g^+  = \begin{cases}{ \bold 1}_{{\frak q}}, &\text {if $ (  f^- ,  g^+ ) \in{ \mathcal R}({\frak q},{\frak r}) $,}\\
0, & \text{if $ (  f^- ,  g^+ )\notin {\mathcal R}({\frak q},{\frak r}), \quad f^- \in  {\mathcal E}^- ({\frak q},{\frak r}), g^+ \in { \mathcal E}^+({\frak q},{\frak r)},        
\
{\frak q},  {\frak r} \in{\frak P},
 $}
\end{cases}
$$
and
$$
\bold 1_{\frak q}\bold 1_{\frak r}= 0,    \quad  \frak q,\frak r \in \frak P,\frak q \neq\frak r .
$$
We call $\mathcal S_\mathcal R(G(\frak P,   \mathcal E^-\cup \mathcal E^+ ))$ an $ \mathcal R$-graph semigroup. 
We write $\mathcal S^-(\frak  P,   \mathcal E^- )$($\mathcal S^+(\frak P,   \mathcal E^+ )$) for the set of non-zero elements of the subsemigroup of 
$\mathcal S(G_\mathcal R(\frak  P,   \mathcal E^-\cup \mathcal E^+ ))$, that is generated by 
$ \mathcal E^-$
($\mathcal E^+$).

Special cases are the graph inverse semigroups of  finite directed graphs 
 $ G (\frak P,\mathcal E)$ (\cite {AH},\cite [Section 10.7.]{ L}). With the edge set 
 $\mathcal E^- = \{  e^-: e_\circ \in \mathcal E_\circ\}$ of a copy of 
 $G(\frak P,\mathcal E)$, and with the  edge set $\mathcal E^+ =
  \{  e^-: e \in \mathcal E\}$ of the reversal of $G(\frak P,\mathcal E)$, the 
 graph inverse semigroup 
 $\mathcal S( G(\frak P,\mathcal E))$ of $G(\frak P,\mathcal E)$
  is  the $\mathcal R$-graph semigroup of the partitioned graph 
 $G(\frak P, \mathcal E^-  \cup\mathcal  E^+ )  $ with the relations
$$
\mathcal R(\frak q, \frak r) = \{(e^- , e^+ ) : e  \in \mathcal E   , s(e ) = \frak q, t(e ) = \frak r \}, \quad   \frak q,\frak r \in \frak P.
$$
 
In \cite {HI} a criterion was given for the existence of an embedding of an irreducible subshift of finite type  into a Dyck shift and this result was extended in \cite {HIK} to a larger class of target shifts with Property $(A)$. These target shifts were constructed by a method that presents  the subshifts by means of a suitably structured irreducible finite labeled directed graph with labels taken from the inverse semigroup of an irreducible finite directed graph, in which every vertex has at least two incoming edges. This method was extended in \cite {Kr4} by the use of $\mathcal R$-graph semigroups. Following   \cite {HIK, Kr4}
we describe this construction.  

We denote a finite directed labelled graph  with
vertex set ${\mathcal V}$, edge set 
${\Sigma}$ and a label map $\lambda$ by $G(\mathcal V, \Sigma, \lambda)$. 
Let there be given an $\mathcal R$-graph 
$G_\mathcal R(\frak  P  ,  \mathcal E^-  \cup \mathcal E^+ )$ and 
a finite strongly connected labeled directed graph $G(\mathcal V, \Sigma, \lambda)$ such that 
\begin{align*}
 \lambda(\sigma) \in  S^-(\frak  P,   \mathcal E^- ) \cup \{ \bold 1_{\frak p}: \frak p \in \frak P\} \cup S^+(\frak P,   \mathcal E^+) ,\qquad \sigma \in \Sigma. \tag {G 1}
\end{align*}
The label map $\lambda$ extends to finite paths $(\sigma_i)_{1 \leq i \leq I}$ in the graph 
$G(\mathcal V, \Sigma)$ by
$$
\lambda((\sigma_i)_{1 \leq i \leq I}) = \prod_{1 \leq i \leq I}\lambda (\sigma_i).
$$
Denoting for $\frak p \in \frak P$ by $\mathcal V(\frak p) $ the set of $V\in \mathcal V$ such that there is a cycle $(\sigma_i)_{1 \leq i \leq I}, I \in\Bbb N,$ in the graph 
$G( \mathcal V, \Sigma)$ from $V$ to $V$ such that
$$ 
\lambda(( \sigma_i)_{1 \leq i \leq I}) = \bold 1_{\frak p}, 
$$
we require the following conditions (G 2 - 5) to be satisfied:

\begin{align*}
\mathcal V(\frak p) \neq \emptyset, \quad \frak p \in \frak P, \tag {G 2}
\end{align*}

\bigskip

\noindent(G 3)\quad\quad\quad\quad\quad\quad\quad\quad \ 
$ \{\mathcal V(\frak p) :\frak p \in \frak P\}$  is a partition of  $\mathcal V,
$

\bigskip

\noindent(G 4)
 For $ V  \in \mathcal V(\frak p), \frak p \in \frak P$,
 and for all edges $\sigma$ that leave $V, \bold 1_{\frak p} \lambda(\sigma) \neq 0$, 
and for all edges $\sigma$ that enter $V, \lambda(\sigma)\bold 1_{\frak p}  \neq 0$,

\bigskip

\noindent(G 5)
For $f\in \mathcal S(G_\mathcal R(\frak  P,   \mathcal E^-\cup \mathcal E^+  ) ),  \frak q,  \frak r \in 
\frak P$, such that $\bold 1_{\frak q}  f\bold 1_{\frak r}  \neq 0$,  and for 
$U\in \mathcal V (\frak q), W\in \mathcal V(\frak r)$, there exists
a path $b$ in the labeled directed graph  $G(\mathcal V,\Sigma, \lambda)$ from $U$ to $W$ such that 
 $\lambda(b) = f$.
 
\bigskip

A finite  labeled directed graph $G(\mathcal V,\Sigma, \lambda)$, that satisfies conditions (G  1 - 5), gives rise to a subshift 
$X(G(\mathcal V,\Sigma, \lambda))$,
that has as its language of admissible words the set of finite paths 
$b$
in  the graph $G(\mathcal V,\Sigma, \lambda)$   such that $\lambda (b) \neq 0$.
We  call this subshift  
an 
$\mathcal S(G_\mathcal R(\frak P, \mathcal E^-\cup  \mathcal E^+))$-presentation.
Given an $\mathcal R$-graph 
$G_\mathcal R(\frak P, \mathcal E^-\cup\mathcal E^+)$
and using the injection of the edge set $ \mathcal E^-\cup  \mathcal E^+$ into 
$\mathcal S(\frak P, \mathcal E^-\cup  \mathcal E^+) $
as label map, one obtains a  particular case of an 
$\mathcal S(G(\frak P, \mathcal E^-\cup  \mathcal E^+))$-presentation,  that we denote by
$X(G_\mathcal R(\frak P, \mathcal E^-\cup\mathcal E^+)),$
and that we call the $\mathcal R$-graph shift of 
$G_\mathcal R(\frak P, \mathcal E^-\cup\mathcal E^+)$.
In the case of the graph inverse semigroups  $\mathcal S( G(\frak P,\mathcal E))$ of strongly connected finite directed graphs 
$G(\frak P,\mathcal E)$  the subshifts   $X( G(\frak P,\mathcal E))$ are  the Markov-Dyck shifts \cite {M}.The Dyck shifts $D_N$ can be obtained in this way from the one-vertex directed graph with $N>1$ loops.
Also the Markov-Motzkin shifts \cite {KM1} of strongly connected finite directed graphs 
$\ G(\frak P,\mathcal E)$  can be written as  $\mathcal S( G(\frak P,\mathcal E))$-presentations.

B\'eal, Blockelet and Dima \cite{BBD1, BBD2} have introduced the notions of a Dyck automaton and of a sofic Dyck shift. Strengthening the Condition (G 1) to
\begin{align*}
\lambda(\sigma) \in   \mathcal E^-  \cup \{ \bold 1_{\frak p}: \frak p \in \frak P\} \cup    \mathcal E^+, \tag {1.1}
\end{align*}
one obtains directed labelled graphs $G(\mathcal V,\Sigma, \lambda)$, that are Dyck-automata, with $\mathcal S(G(\frak P, \mathcal E^-\cup  \mathcal E^+))$-presentations $X(\mathcal V,\Sigma, \lambda)$, that are sofic Dyck shifts. The alphabet of a sofic Dyck shift is partitioned into a set of call symbols, a set of internal symbols and a set of return symbols. The corresponding partition of the alphabet of an $\mathcal S$-presentation, that satisfies (1.1), is the partition of its alphabet into the sets
$ \mathcal E^- , \{ \bold 1_{\frak p}, \frak p \in \frak P\} , \mathcal E^+$.
The set of matched edges, that appears in the construction of a Dyck automaton, is provided by the relation $\mathcal R$. The $\mathcal R$-graph shifts are finite type Dyck shifts in the sense of \cite{BBD3}.

Given finite sets $\mathcal E^-$ and $\mathcal E^+$ and a relation 
$\mathcal R \subset \mathcal E^-  \times  \mathcal E^+  $, we set
$$
\mathcal E^-(\mathcal R) = \{e^-  \in  \mathcal E^-: \{e^-  \}\times \mathcal E^+\subset \mathcal R  \},
\quad\mathcal E^+(\mathcal R) = \{e^+  \in  \mathcal E^+: \mathcal E^-\times \{e^+  \}\subset \mathcal R  \}.
$$
For a partitioned 
directed
graph  $G(\frak P, \mathcal E^-\cup \mathcal E^+)$ denote by 
$\frak P^{(1)}$ the set of vertices in $\frak P$ that have a single predecessor vertex in $\mathcal E^-$, or, equivalently, that have a single successor vertex in $\mathcal E^+$. For $\frak p \in\frak P^{(1)}$ the predecessor vertex of  $\frak p $ in $\mathcal E^-$, which is identical to the successor vertex of  
$\frak p $ in  $\mathcal E^+$,
will be denoted by $\eta(\frak p)$.
For an $\mathcal R$-graph $G_\mathcal R(\frak P, \mathcal E^-\cup \mathcal E^+)$ 
we set
$$
\mathcal E^-_\mathcal R = \bigcup_{\frak p \in \frak P^{(1)}}\mathcal E^-(\mathcal R(\eta (\frak p), \frak p))  ,
\quad
\mathcal E^+_\mathcal R = \bigcup_{\frak p \in \frak P^{(1)}}\mathcal E^+(\mathcal R(\eta (\frak p), \frak p)) ,
$$
and
$$
{ \frak P}_\mathcal  R ^{(1)}=
\{ \frak p \in 
\frak P^{(1)}: 
 \mathcal R(\eta (\frak p), \frak p)    =   \mathcal E^-(\eta (\frak p), \frak p)  \times \mathcal E^+(\eta (\frak p), \frak p)   \}.
$$

An  $\mathcal S(G(\frak P, \mathcal E^-\cup  \mathcal E^+))$-presentation
$X(G(\mathcal V,\Sigma, \lambda))$ 
is a  Markov shift if and only if $ \frak P^{(1)}={\frak P}_\mathcal  R ^{(1)}$.
We formulate two conditions (I) and (II) on $\mathcal R$-graphs $G_\mathcal R(\frak P, \mathcal E^-\cup \mathcal E^+)$, such that
$$
\frak P^{(1)}\setminus { \frak P}_\mathcal  R ^{(1)}\neq \emptyset.
$$
Condition (II) comes in two parts (II$-$) and (II$+$) that  are symmetric to one another:

\bigskip

(I)
For  $\frak p \in  \frak P^{(1)}\setminus { \frak P}_\mathcal  R ^{(1)}$,
$
\mathcal E^-(\mathcal R(\eta (\frak p), \frak p))= \emptyset,
$
or
$
\mathcal E^+(\mathcal R(\eta (\frak p), \frak p)) 
= \emptyset.
$

\bigskip

(II$-$) There is no cycle in the directed graph $G(\frak P, \mathcal E^-_{\mathcal R}) $.

\bigskip

(II$+$)  
There is no cycle in the directed graph $G(\frak P, \mathcal E^+_{\mathcal R}) $.

\bigskip
 
We show in Section 2 that an 
$\mathcal S(G_\mathcal R(\frak P, \mathcal E^-\cup  \mathcal E^+))$-presentation has Property $(A)$ if and only if the $ \mathcal R$-graph 
$ G_\mathcal R(\frak P, \mathcal E^-\cup  \mathcal E^+)$ satisfies conditions (I)  and (II).
In particular the  $\mathcal R$-graph
shifts $X( G_\mathcal R(\frak P, \mathcal E^-\cup  \mathcal E^+))$
have Property $(A)$ if and only if the  $\mathcal R$-graph 
$G_\mathcal R(\frak P, \mathcal E^-,  \mathcal E^+)$ satisfies Conditions (I)  and (II).
This implies that Markov-Dyck shifts of strongly connected finite directed graphs
have Property $(A)$. Also the Markov-Motzkin shifts of strongly connected finite directed graphs 
have Property $(A)$. Concerning the invariance under flow-equivalence of Property $(A)$, and of the associated semigroup, in particular for $\mathcal R$-graph shifts, see \cite{Kr3}.
 
In Section 3 we describe how one can
obtain from an $\mathcal R$-graph 
$ G_\mathcal R(\frak P, \mathcal E^-\cup  \mathcal E^+)$, such that 
$\frak P^{(1)}\setminus { \frak P}_\mathcal  R ^{(1)}\neq \emptyset$, that satisfies conditions (I)  and (II), an $\mathcal R$-graph 
$ G_{\widehat {\mathcal R}}(\widehat{\frak P},\widehat{ \mathcal E}^-\cup \widehat{ \mathcal E}^+)$,  such that the $\mathcal R$-graph semigroup 
$\mathcal S(G_{\widehat {\mathcal R}}(\widehat{\frak P},\widehat{ \mathcal E}^-\cup \widehat{ \mathcal E}^+))$ is associated to all 
$\mathcal S(G_\mathcal R(\mathcal V, \mathcal E^-\cup  \mathcal E^+))$-presentation. To obtain the 
$\mathcal R$-graph 
$ G_{\widehat {\mathcal R}}({\frak P},\widehat{ \mathcal E}^-\cup \widehat{ \mathcal E}^+)$  we apply a procedure, that extends a procedure for Markov-Dyck shifts,  that was described in \cite{HK2} and \cite{KM2}.

In Section 5 we consider examples.  We show, that the isomorphism class of a one-vertex  $\mathcal R$-graph can be recovered from the topological conjugacy class of its $\mathcal R$-graph shift. 
For certain  $\mathcal R$-graph semigroups of a one-vertex graph see 
 \cite [Section 4]{HK1}.
We also consider a class of examples of $\mathcal R$-graph shifts, to which the $\mathcal R$-graph semigroups of one-vertex $\mathcal R$-graphs are associated, and
that have the graph of a renewal system as an underlying graph
$G( \frak P, \{ (\frak q, \frak r) \in \frak P \times \frak P: \mathcal E^-(\frak q, \frak r) \neq \emptyset) \})$. For $\mathcal R$-graphs in this class we show, that the isomorphism class of the $\mathcal R$-graph can be recovered from the topological conjugacy class of its $\mathcal R$-graph shift. This result covers certain Markov-Dyck shifts. For other results on the reconstruction of a directed graph from its Markov-Dyck shift see \cite[Section 3]{KM1} and \cite{HK2}.

%section2
\section{  $\mathcal S_ \mathcal R(\frak P, \mathcal E^-\cup \mathcal E^+ ) $-presentations}

We denote the length of a directed path in a directed graph by $\ell$. 
Given an an $\mathcal R$-graph 
$$
G_{\mathcal R} = G_{\mathcal R}(\frak P, \mathcal E^-\cup \mathcal E^-),
$$ 
we denote by $\mathcal S^-(G_\mathcal R)$($\mathcal S^+(G_\mathcal R)$) the set of
non-zero elements of the subsemigroup of $\mathcal S^-(G_\mathcal R)$($\mathcal S^+(G_\mathcal R)$), that is generated by 
 $\mathcal E^-$($\mathcal E^+$).
There is the one-to-one
correspondence between 
the paths in the directed graph $G( \frak P, \mathcal E^-)$
( $G( \frak P, \mathcal E^+)$)
 and the elements of $\mathcal S^-(G_\mathcal R)$
 ($\mathcal S^+(G_\mathcal R)$).
We will use the same symbol to denote a path in
 $G( \frak P, \mathcal E^-)$
($G( \frak P, \mathcal E^+)$)
and the corresponding element of $\mathcal S^-(G_\mathcal R)$
($\mathcal S^+(G_\mathcal R)$)  (as we have already done for the edges in 
$ \mathcal E^- \cup \mathcal E^+  $). It will be clear from the context, which one is meant.
For the elements of $\mathcal S^-(G_\mathcal R)$
($\mathcal S^+(G_\mathcal R)$)
the  notations
$
\ell, 
 s, t$ are also used.
An element $g$ of $\mathcal S(G_\mathcal R)$ determines uniquely 
 $$
 \frak q(g) \in \frak P,  \quad
 u^+(g) \in  \{\bold 1_{\frak q(g)}  \}\cup \mathcal S^+(G_\mathcal R), \
 u^-(g) \in  \{\bold 1_{\frak q(g)}  \}\cup \mathcal S^-(G_\mathcal R),
 $$
 such that its normal form is given by
 $$
 g =  u^+(g) \bold 1_{\frak q(g)}u^-(g).
 $$
We write the normal forms of elements $g^-$ of $\mathcal S^-(G_\mathcal R)$, and 
of elements $g^+$ of $\mathcal S^+(G_\mathcal R)$ as
$$
g^- = \prod_{1 \leq i(-) \leq \ell(g^-)}e_{i(-)}^-[g(-)],
\quad
g^+ = \prod_{1 \leq i(+) \leq \ell(g^+)}e_{i(+)}^-[g(+)].
$$
We denote the set of non-zero elements of the subsemigroup of $\mathcal S
(G_\mathcal R )$, that is generated by $\mathcal E^-_\mathcal R$
($\mathcal E^-_\mathcal R$) by 
$\mathcal S^-_\mathcal R(G_\mathcal R)$($\mathcal S^+_\mathcal R(G_\mathcal R) $).

 \subsection{Context in 
 $\mathcal S(G_\mathcal R(\frak P, \mathcal E^-\cup  \mathcal E^+))$.}
 
In this subsection we consider an  $\mathcal R$-graph 
 $$
 G_\mathcal R = G_\mathcal R(\frak P, \mathcal E^-\cup  \mathcal E^+),
 $$
 such that
 \begin{align*}
\frak P^{(1)}\setminus { \frak P}_\mathcal  R ^{(1)}\neq \emptyset, \tag{2.1}%
\end{align*}
that satisfies conditions (I) and (II).
For  $f\in\mathcal S(G_\mathcal  R)$ we set
 $$
 \Gamma^-(f) = \{g \in  \mathcal S:gf \neq 0\}, \quad 
  \Gamma^+(f) = \{g \in  \mathcal S:fg \neq 0\},
 $$
 $$
 \Gamma (f) = \{(g(-), g(+) ) \in  \mathcal S^2: g(-)fg(+)  \neq 0   \},
 $$
 and we refer to $\Gamma (f)$ as the context of $f$.

We denote
for $\frak q, \frak r\in \frak P$  by $\ell_-(\frak q , \frak r )$ 
 ($\ell_+(\frak q , \frak r )  $) the length of a path in
  $G( \frak P , \mathcal E^- _\mathcal R)   $
 ($G( \frak P , \mathcal E^+ _\mathcal R ) $) from 
$\frak q$ to $\frak r$, provided such a path exists. By Condition (II) this notation is meaningful.

We denote for $\frak q \in { \frak P}_\mathcal  R ^{(1)}$ by $D_+(\frak q)$ the maximal length of a path in $G(\frak P,\mathcal E^+_\mathcal R )$ that leaves $\frak q$, and by
$D_-(\frak q)$ the maximal length of a path in $G(\frak P,\mathcal E^-_\mathcal R )$ that enters $\frak q$. We also set 

$$
D_\circ(\frak q) =
\begin{cases}
\min \{ D_+(\frak q) ,D_-(\frak q) \}, &\text{if  $\frak q \in { \frak P}_\mathcal  R ^{(1)}$}, 
\\
0, &\text {if  $\frak q \in \frak P^{(1)}\setminus { \frak P}_\mathcal  R ^{(1)}$}.
\end{cases}
$$
We set inductively
$$
\eta^k( \frak q ) = \eta(\eta^{k-1}(\frak q )), \qquad 1 < k < 
\max \{D_-(\frak q), D_+(\frak q)\},  \frak q \in \frak P^{(1)}.
$$
We remark, that  a path $b$ in $G(\frak P, \mathcal E^+_\mathcal R)$,  that starts at $\frak q$, and that has length  less then or equal to $D_\circ(\frak q)$, transverses the vertices 
$\eta^k, 1 \leq k  < \ell (b)$, before entering its target vertex. 
A similar remark applies to paths in $G(\frak P, \mathcal E^+_\mathcal R)$, that enter 
$\frak q$.

We set
$\eta^0(\frak q) =  \frak q$,
and for $\frak q \in \frak P^{(1)}$ we set
$$
\frak R_-(\frak q) = \{\eta^k(\frak q): 0 \leq k < D_-(\frak q)\}, 
\quad
\frak R_+(\frak q) = \{\eta^k(\frak q): 0 \leq k < D_+(\frak q)\},
$$
and for $ \frak q(-), \frak q(+) \in \frak P^{(1)}, $ such that
$$
 \frak q(-)\neq \frak q(+),
\quad
\frak R_+(\frak q(-)) \cap \frak R_-(\frak q(+)) \neq \emptyset,
$$
we denote by  $H_+( \frak q(-) ,  \frak q(+) )$($H_-( \frak q(-) ,  \frak q(+) ) $) the minimal length of a path in $G(\frak P, \mathcal E^+_\mathcal R)$
($G(\frak P, \mathcal E^-_\mathcal R)$), that has $\frak q(-)$($\frak q(+)$) as source vertex and a vertex in 
$\frak R_+(\frak q(+)))$ ($\frak R_+(\frak q(-)))$) as target vertex.

%lemma2.1
\begin{lemma}
For $ \frak q(-), \frak q(+) \in \frak P^{(1)}, $ such that
$$
 \frak q(-)\neq \frak q(+),
\quad
\frak R_+(\frak q(-)) \cap \frak R_-(\frak q(+)) \neq \emptyset,
$$
one has that
\begin{align*}
\eta^{H_+(\frak q(-),\frak q(+))}(\frak q(-)) = \eta^{H_-(\frak q(-),\frak q(+))}(\frak q(+)). 
\tag {2.2}
\end{align*}
\end{lemma}
\begin{proof} 
The inequality
$$
\eta^{H_+(\frak q(-),\frak q(+))}(\frak q(-)) \neq \eta^{H_-(\frak q(-),\frak q(+))}(\frak q(+)),
$$
would imply  the existence of a path in $G( \frak P , \mathcal E^+  )$, as well as in 
$G( \frak P , \mathcal E^+  )$, from 
$\eta^{H_+(\frak q(-),\frak q(+))}(\frak q(-))  $ to
 $\eta^{H_-(\frak q(-),\frak q(+))}(\frak q(+))  $, 
 and also the existence of a path in $G( \frak P , \mathcal E^+  )$, as well as in 
$G( \frak P , \mathcal E^+  )$, from 
$\eta^{H_-(\frak q(-),\frak q(+))}(\frak q(+)) $ to 
$\eta^{H_+(\frak q(-),\frak q(+))}(\frak q(-))  $, contradicting the assumption (2.1).
\end{proof} 
We denote the vertex, that appears in (2.2) by $\frak p(\frak q(-),\frak q(+))$.

\begin{Lemma A}
Let $\frak q \in \frak P$. For 
\begin{align*}
f^+ \in  \{ \bold 1_ {\frak q} \}\cup\mathcal S^+( G_\mathcal R ), \quad f^- \in \{ \bold 1_ {\frak q} \}\cup
 \mathcal S^-( G_\mathcal R  ), \tag{2.3}
\end{align*}
such that
\begin{align*}
s(f^+) =\bold 1_ {\frak q} = t(f^-), \quad t(f^+ ) = s(f^-), \tag{2.4}
\end{align*}
all elements of $\mathcal S(G_\mathcal R)$ of the form $f^+f^-$ have the same context.
\end{Lemma A}

\begin{proof} 
One notes, that
$$
0 \leq \ell (f^+) = \ell (f^-) \leq D_\circ(\frak q ).
$$
Set
\begin{align*}
&\Gamma^-_\circ(\frak q) = \{g(-) \in 
 \Gamma^-(\frak q): \ell(h^-(g(-)))  \leq D_\circ (\frak q)\},
\\
&\Gamma^+_\circ(\frak q) = \{g(+) \in  \Gamma^+(\frak q): \ell(h^+(g(-)))
  \leq D_\circ (\frak q)\},
\end{align*}
\begin{align*}
\Gamma_\circ(\frak q) = \{( g(-) , g(+) ) \in (&\Gamma^-(\frak q)\setminus  \Gamma^-_\circ(\frak q)   ) \times ( \Gamma^+(\frak q)\setminus  \Gamma^+_\circ(\frak q)    ): 
\\
(\prod_{1 \leq i(+) <\ell( h^- (g(-)) ) - D_\circ (\frak q))}&e^+_{i(+)}[g(-)])
\ \bold 1_{\eta^{D_\circ(\frak q)}(\frak q)}
\\
&(\prod_{\ell( h^+(g(+)) - D_\circ (\frak q) < i(+) \leq\ell( h^+(g(+))}e^+_{i(+)}[g(-)]) \neq 0.
\end{align*}

By Condition (II) one has for $f^+,f^-$ as in (2.3) and (2.4)
$$
\Gamma(f^+f^-) = ( \Gamma^-_\circ(\frak q) \times \Gamma^+(\frak q)) 
\cup (\Gamma^-(\frak q) \times \Gamma^+_\circ(\frak q) )  \cup \Gamma_\circ(\frak q).\qed
$$
\renewcommand{\qedsymbol}{}
\end{proof} 

\begin{Lemma B} 
Let $ \frak q(-), \frak q(+) \in \frak P^{(1)}, $ such that
$$
 \frak q(-)\neq \frak q(+).
 $$
\text{(B$-$)}   Let there exist a path in $S^+( G_\mathcal R  )$ from $\frak q(-)$ to $\frak q(+)$. Then for 
 \begin{align*}
 h^+ \in \mathcal S( G_\mathcal R  ), \quad f^+ \in \{ \bold 1_{\frak q(+)} \} \cup 
 \mathcal S^+( G_\mathcal R  ), \quad f^- \in 
\{ \bold 1_{\frak q(+)} \} \cup \mathcal S^-( G_\mathcal R  ), \tag{2.5}
 \end{align*}
 such that
 \begin{align*}
 s( h^+) = \bold 1_{ \frak q(-)}, \quad t( h^+) =  \bold 1_{ \frak q(+)}, \tag{2.6}
 \end{align*}
 \begin{align*}
 s(f^+ ) =\bold 1_{ \frak q(-)}, \quad  t(f^+) =  s(f^-), \quad t(f^-) =  \bold 1_{ \frak q(+)},
  \end{align*}
 all elements of $\mathcal S(G_\mathcal R)$ of the form 
  $ h^+ f^+f^-$ have the same context.
  
  \medskip
  \noindent
  (B+)  Let there exist a path in from $\frak q(+)$ to $\frak q(-)$. Then for 
 \begin{align*}
 h^+ \in \mathcal S (G_\mathcal R  ), \quad f^+ \in \{ \bold 1_{\frak q(+)} \} \cup \mathcal S^+( G  ), \quad f^- \in 
\{ \bold 1_{\frak q(+)} \} \cup \mathcal S^-( G_\mathcal R  ), 
 \end{align*}
 such that
 \begin{align*}
 &s( h^+) = \bold 1_{ \frak q(-)}, \quad t( h^+) =  \bold 1_{ \frak q(+)}, 
 \\
 s(f^+ ) =  
& \bold 1_{ \frak q(-)},
  \quad  t(f^+) =  s(f^-), 
 \quad t(f^-) =  \bold 1_{ \frak q(+)},
  \end{align*}
 all elements of $\mathcal S(G_\mathcal R)$ of the form
  $  f^+f^-h^-$ have the same context.
\end{Lemma B} 
\begin{proof} 
We prove (B-).
We note that
$$
0 \leq \ell (f^+) = \ell (f^-) \leq D_\circ(\frak q(+) ).
$$
Set
\begin{align*}
&\Gamma^-_\circ(\frak q(-), \frak q(+)) = \{g(-) \in  \Gamma^-(\bold 1_{ \frak q(-)}): \ell(h^+(g(-))) 
 \leq  \ell(  \frak q(-),  \frak q(+) ) +D_\circ (\frak q(+))\},
\\
&\Gamma^+_\circ(\frak q(+)) = \{g(+) \in  \Gamma^+(\bold 1_{ \frak q(+)}): \ell(h^-(g(+))) 
 \leq D_\circ (\frak q(+))\},
\end{align*}
\begin{multline*}
\Gamma_\circ(\frak q(-), \frak q(+)) = 
\\
( g(-) , g(+) ) \in 
(\Gamma^-(\frak q(-))\setminus  
\Gamma^-_\circ(\frak q(-), \frak q(+) ) 
\times ( \Gamma^+(\frak q(+))\setminus 
 \Gamma^+_\circ(\frak q(+))    ): 
\end{multline*}
\begin{multline*}
(\prod_{1 \leq i(-)< \ell( h^- (g(-)) ) - \ell_+(\frak q (-) , \frak q(+))-D_\circ (\frak q(+))}
e^-_{i(-)}[g(-)])
\ \bold 1_{\eta^{D_\circ(\frak q(+))}(\frak q(+))}
\\
(\prod_{D_\circ (\frak q(+)) < i(+) \leq\ell( h^+(g(+))}e^+_{i(+)}[g(+)]) \neq 0.
\end{multline*}

By Condition (II) one has  for $h^+,  f^+,  f^-,$ as in (2.5) and (2.6), that
\begin{align*}
&\Gamma(h^+f^+f^-) =
\\
&( \Gamma^-_\circ(\frak q(-),\frak q(+)) \times \Gamma^+(\frak q(+)) 
\cup (\Gamma^-(\frak q(-)) \times \Gamma^+_\circ(\frak q(+)) )  \cup 
\Gamma_\circ(\frak q(-)\frak ,q(+)).
\end{align*}
The proof of (B+) is symmetric.
\end{proof}

%LemmaC
\begin{Lemma C}
Let $ \frak q(-), \frak q(+) \in \frak P^{(1)}, $ such that
\begin{align*}
\frak q(-)\neq \frak q(+), 
 \end{align*}
 and
 $$
 \frak R(\frak q(-)) \cap  \frak R(\frak q(+)) \neq \emptyset,
 \quad
 \frak p(\frak q(-), \frak q(+))\notin \{\frak q(-), \frak q(+) \}.
 $$
 Then for
 \begin{align*}
 h^+, f^+ \in \mathcal S^+(G_\mathcal R),  \ \  f^-, h^- \in \mathcal S^-(G_\mathcal R),  
 \tag{2.7}
 \end{align*}
 such that
 \begin{align*}
 s(h^+) = \bold 1_{ \frak q(-)},\  t(h^+) = \bold 1_{\frak p( \frak q(-),\frak q(+) )},   \tag{2.8}%
 \end{align*}
 \begin{align*}
 s(f^+ ) =   \bold 1_{\frak p( \frak q(-),\frak q(+) )}, \ t(f^+)  = s(f^-), \ t(f^-) = 
 \bold 1_{\frak p( \frak q(-),\frak q(+) )} ,
 \end{align*}
 \begin{align*}
 s(h^-  ) =  \bold 1_{\frak p( \frak q(-),\frak q(+) )} , \  t( h^- ) = \bold 1_{\frak q(+)},
 \end{align*}
all elements of $\mathcal S(G_\mathcal R)$ of the form  $h^+f^+f^-h^- $ have the same context.
\end{Lemma C} 
\begin{proof} 
One notes that
$$
0\leq \ell(f^+) = \ell(f^-)  \leq D_\circ(\frak p( \frak q(-),\frak q(+) ) ).
$$

Set
\begin{multline*}
\Gamma^-_\circ(\frak q(-), \frak q(+)) = 
\\
\{g(-) \in  \Gamma^-(\bold 1_{\frak q(-)}): \ell(h^-(g(-))) 
 \leq  H_-(\frak q(-),  \frak q(+) ) +D_\circ(\frak p( \frak q(-),\frak q(+) ))\},
\end{multline*}
\begin{multline*}
\Gamma^-_\circ(\frak q(-), \frak q(+)) = 
\\
\{g(+) \in  \Gamma^+(\bold 1_{\frak q(+)}): \ell(h^+(g(+))) 
 \leq  H_+(\frak q(-),  \frak q(+) ) +D_\circ (\frak p( \frak q(-),\frak q(+) ))\},
\end{multline*}
\begin{align*}
&\Gamma_\circ(\frak q(-),\frak q(+) ) = 
\\
&\{( g(-) , g(+) ) \in (\Gamma^-(\bold 1_{\frak q(-)})\setminus  \Gamma^-_\circ(\frak q(-), \frak q(+) ) \times ( \Gamma^+(\bold 1_{\frak q(+)})\setminus 
 \Gamma^+_\circ(\frak q(+),\frak q(+))    ): 
\end{align*}
\begin{multline*}
(\prod_{1 \leq i(-)< \ell( h^- (g(-)) ) - \ell_+(\frak q (-) , \frak q(+))-D_\circ (\frak q(+))}
e^-_{i(-)}[g(-)])
\ \bold 1_{\eta^{D_\circ(\frak p( \frak q(-),\frak q(+) ))}(\frak p( \frak q(-),\frak q(+))}
\\
(\prod_{D_\circ (\frak q(+)) < i(+) \leq\ell( h^+(g(+))}e^+_{i(+)}[g(+)]) \neq 0.
\end{multline*}
By Condition (II) one has for  that for $h^+, f^+ ,  f^-, h^- $ as in (2.7), (2.8), that
\begin{align*}
&\Gamma(h^+f^+f^-h^-) =
\\
&( \Gamma^-_\circ(\frak q(-),\frak q(+)) \times \Gamma^+(\bold 1_{\frak q(+)}) 
\cup (\Gamma^-(\bold 1_{\frak q(-)}) \times \Gamma^+_\circ(\frak q(-),\frak q(+)) )  \cup 
\Gamma_\circ(\frak q(-)\frak ,q(+)).\qed
\end{align*}
\renewcommand{\qedsymbol}{}
\end{proof} 
%subsection2
\subsection{Property (A) and $\mathcal R$-graph shifts.}

We introduce  notation and terminology for subshifts. The set of periodic points of a subshift $X$ we denote by $P(X)$. The smallest period of $p \in P(X)$, we denote by 
$\pi(p)$.
We denote the language of admissible words of a subshift $X\subset\Sigma^{\Bbb Z}$ by 
$\mathcal L(X)$. The context of a word $b\in \mathcal L(X)$ is defined as the set
$$
\Gamma(b) = \{c(-), c(+)) \in  \mathcal L(X)^2: C(-)bc(+) \in \mathcal L(X)\}.
$$
Concerning $\mathcal S(G(\frak P, \mathcal E^- \cup \mathcal E^+  ))$-presentations
 $X(\mathcal V, \Sigma, \lambda))$ we remark, that 
\begin{multline*}
\Gamma (b) = 
\{ (c(-), c(+)) \in \mathcal L(X(\mathcal V, \Sigma, \lambda)))^2: 
t(c(-)) = s(b), t(b) = s(c(+)),
\\
(\lambda(c(-)), \lambda(c(+))) \in \Gamma(\lambda(b))\},  \quad
b \in \mathcal L(X(\mathcal V, \Sigma, \lambda))).
\end{multline*}

Given a subshift $X \subset \Sigma^{\Bbb Z}$ we set
$$
	x_{[i,k]}Ê=Ê(x_{j})_{i \leq j \leq k}, \quad  		x \in X,  
			i,k\in \Bbb Z ,  i \leq k,
$$
and
$$
X_{[i,k]}Ê=Ê\{ x_{[i,k]} : x\in X \},i,k\in \Bbb Z ,  i \leq k . 
$$

We set
\begin{multline*}
\Gamma(a)Ê= \bigcup_{n,m \in \Bbb N}
\{(b,c) \in  X_{[i-n, i]} \times X_{[k, k+m]}: (  b , a , c)  \in X_{[i-n, k+ m]}\}, \\ a\in X_{[i,k]},	 i,k\in \Bbb Z ,  i \leq k.	
\end{multline*}
and call $\Gamma(a)$ the context of the block $a$.
We set
\begin{multline*}
\Gamma^{+}_n(a)Ê=\{ b\in X_{(k,k+n]}: (a,b)\in X_{[i,k+n]}\} , \\
 n \in \Bbb N,a\in X_{[i,k]},	 i,k\in \Bbb Z ,  	i \leq k.
 \end{multline*}
$\Gamma^-$ has the symmetric meaning.
We set
\begin{multline*}
 \omega^{+}(a)Ê= \bigcup_{n\in \Bbb N}\bigcap_{n\in \Bbb N}\bigcap_{c \in \Gamma_{n}^{-}(a)}
\{  b  \in X_{(k,k+n]}: (c, a,  b ) \in X_{_{[i-n, k+ m]}}\},Ê\\
 a\in X_{[i,k]},	 i,k\in \Bbb Z ,  	i \leq k.
\end{multline*}
$\omega^{-}$ has the  symmetric meaning. 

Given a subshift $X\subset \Sigma ^\Bbb Z$ we define a subshift of finite type (more precicely, an  n-step Markov shift) $A_{n}(X)$ by 
$$
 A_{n}(X) = \bigcap _{i\in \Bbb Z}
( \{x  \in X: 
x_{i} \in \omega^{+} (x_{[i - n, i)})\}\cap( \{x  \in X: 
x_{i} \in \omega^{-} (x_{(i,i + n]})\})\,\quad n \in \Bbb N,
$$
and we set
$$
A(X) = \bigcup_{n \in \Bbb N} A_{n}(X).
$$

We recall from \cite {Kr2} the definition of Property (A).
For $n \in \Bbb N$ a subshift $X \subset  \Sigma ^{\Bbb 
Z},$ 
has property $(a, n, H),H \in \Bbb N,$  if for $h,
\widetilde {h} \geq 3H$ and for
$
I_-,  I_+,\widetilde{I}_-,  \widetilde{I}_+ \in \Bbb Z,
$
such that
$$
I_+-I_-, \widetilde{I}_+-\widetilde{I}_- \geq 3H,
$$
and for
$$
a \in A_{n}(X)_{(I_-,  I_+ ]}, \quad \widetilde{a} \in A_{n}(X)_{(\widetilde{I}_-, 
 \widetilde{I}_+ ]}, 
$$
such that
$$
a_{(I_-, I_- +  H]} = \tilde{a}_{(\widetilde{I}_-, \widetilde{I}_ + H]},\quad
a_{(  I_+  - H,   I_+ ]} = \tilde{a}_{( \widetilde{I}_+ - H,  \widetilde{I}_+]},
$$
one has that $a$ and $\tilde{a}$ have the same context. A subshift $X \subset  \Sigma ^{\Bbb Z}$
has property $(A)$ if there are $H_{n}, n \in  \Bbb N$, such
that $X$ has the properties $(a, n, H_{n}), n \in \Bbb N $.

\bigskip
%Theorem
\begin{theorem}
 Let 
 $$
 G_ \mathcal R =  G_ \mathcal R(\frak P, \mathcal E^-\cup \mathcal E^+ ) ,
 $$
 be an $\mathcal R$-graph such that 
 \begin{align*}
 \frak P^{(1)} \setminus\frak P^{(1)}_\mathcal R  \neq \emptyset. \tag{2.9}
\end{align*}
For an $\mathcal S_ \mathcal R(\frak P, \mathcal E^-\cup \mathcal E^+ ) $-presentation
$X(\mathcal V, \Sigma, \lambda)$   to have Property (A) it is necessary and sufficient, that 
$G_ \mathcal R(\frak P, \mathcal E^-\cup \mathcal E^+ ) $ satisfies Conditions (I) and (II).
\end{theorem}
\begin{proof}
We prove necessity.
We choose vertices 
$
V_\frak p\in \mathcal V( \frak p ),  \frak p \in \frak P ,
$
and cycles $c_\frak p, \frak p \in \frak P $ in the graph $G(\mathcal V, \Sigma)$, such that
$$
s(c_\frak p ) =  t(c_\frak p ) = V_\frak p, \quad  \lambda(c_\frak p ) = \bold 1_\frak p ,
 \qquad \frak p \in \frak P. 
$$
For $ k \in \Bbb N$ we denote by $c_\frak p^k $ the cycle, that transverses k-times the 
cycle $c_\frak p$. Also we choose for all $e^- \in \mathcal E^-$ a path $a_{e^-}$ in the graph  
$G(\mathcal V, \Sigma)$, such that
$$
s(a_{e^-})  =   V_{s(e^-)}   ,   t(a_{e^-})  =   V_{t(e^-)}, \quad  \lambda(a_{e^-} ) = e^-. 
$$
and we make similar choices for all $e^+ \in \mathcal E^+$. We set
$$
M = \max(\{\ell(a_{e^-}): e^- \in \mathcal E^-  \} \cup   \{\ell(c_\frak p): \frak p \in \frak P \} \cup  \{\ell(a_{e^+}):e^+ \in \mathcal E^+\}).
$$
One has that
\begin{align*}
c_\frak p^k   \in    \mathcal L( A_{M}(X(\mathcal V, \Sigma, \lambda))), \qquad k \in \Bbb N, \frak p \in \frak P.  
\end{align*}

\noindent
(I). Assume that the $\mathcal R$-graph 
$\mathcal G_ \mathcal R(\frak P, \mathcal E^-\cup \mathcal E^+ ) $ does not satisfy Condition (I). Under this assumption we can choose  a vertex $\frak p \in \frak P^{1} \setminus \frak P^{1}_\mathcal R$, and edges
\begin{align*}
e^- \in   \mathcal E^-_\mathcal R(\eta(\frak p), \frak p ), \qquad 
e^+ \in   \mathcal E^+_\mathcal R(\eta(\frak p), \frak p ), \tag {2.10}%
\end{align*}
such that
\begin{align*}
(e^-  , e^+ ) \notin \mathcal R(\eta(\frak p), \frak p ) . \tag{2.11}%
\end{align*}
By construction
\begin{align*}
(c_\frak p^k,  a_{e^+} , a_{e^-}, c_\frak p^k )\in \mathcal L( A_{M}(X(\mathcal V, \Sigma, \lambda))), \quad k \in \Bbb N. 
\end{align*}
It follows from (2.10), that
$$
(a_{e^-} , a_{e^+}) \in \Gamma(c_\frak p^k,  a_{e^+} , a_{e^-}, c_\frak p^k ), \quad 
k \in \Bbb N.
$$
and it follows from (2.11), that
$$
(a_{e^-} , a_{e^+}) \notin \Gamma ( c_\frak p^{2k}  ), \quad k \in \Bbb N
$$
We have shown, that $X(G( \mathcal V , \Sigma  , \lambda) ) $ does not have Property (A).

\noindent
(II) Assume that the $\mathcal R$-graph $G_ \mathcal R(\frak P, \mathcal E^-\cup \mathcal E^+ ) $ does not satisfy Condition (II -). By (2.9) every cycle in 
$G( \frak P, \mathcal E^-)$ transverses at least one vertex in 
$\frak P^{(1)} \setminus \frak P^{(1)}_\mathcal R$.
We can therefore choose a path 
$$
f^- = (e^-_l)_{1 \leq l \leq \ell ( f^- )},
$$
in the graph $G(\frak P,\mathcal E^-_\mathcal R  )$, and a vertex
$\widetilde {\frak q} \in  \frak P^{(1)} \setminus \frak P^{(1)}_\mathcal R$, such that
$$
s(f^-) = \widetilde {\frak q} = t(f^- ),
$$
together with edges
\begin{align*}
\widetilde{e}^- \in \mathcal E^-_\mathcal R, \qquad  \widetilde{e}^+ \in 
\mathcal E^+_\mathcal R, \tag{2.12}%
\end{align*}
such that
$$
t(\widetilde{e}^-) = \widetilde {\frak q} = s(\widetilde{e}^+ ),
$$
and such that
\begin{align*}
(\widetilde{e}^- ,\widetilde{e}^+ ) \notin \mathcal R. \tag{2.13}
\end{align*}
We set
$$
a_{ f^-} = (a_{e^-_l})_{1 \leq l \leq \ell ( f^- )}.
$$
By construction
\begin{align*}
(c_\frak p^k,  a_{ f^-}, c_\frak p^k )\in \mathcal L( A_{M}(X(G))), \quad k \in \Bbb N. 
\end{align*}
It follows from (2.12), that
$$
(a_{e^-} , a_{e^+}) \in \Gamma(c_\frak p^k,  a_{e^+} , a_{e^-}, c_\frak p^k ), \quad 
k \in \Bbb N.
$$
and it follows from (2.13), that
$$
(a_{e^-} , a_{e^+}) \notin \Gamma ( c_\frak p^{2k}  ), \quad k \in \Bbb N
$$
We have shown, that $X(G( \mathcal V , \Sigma  , \lambda) ) $ does not have Property (A).

\noindent
The case (II +) is symmetric.

We prove sufficiency.
Let $n \in \Bbb N$, and let 
 $a(-), a(+)\in \mathcal L(X(\mathcal V, \Sigma, \lambda ) ), \ell(a(-)), \ell (a(+))  = n.$
 For $m\in[1,n]$ denote by by $a(-)_{[1,m]}$($a(+)_{[m,n]} $) the prefix (suffix) of length 
 $m$ ($n - m + 1  $) of
  $a(-)$($a(+)$). Set
 
 \begin{multline*}
M(-) =
\\
\begin{cases}
\max \{m \in [1, n]: \lambda (a(-)_{[1, m]}) = u^+(\lambda (a(-))  \}
&\text{if  $u^+(\lambda (a(-)) \in \mathcal S^+(G_ \mathcal R)$}, \\
0, &\text {if  $u^+(\lambda (a(-)) = \bold 1_{\frak q(\lambda (a(-))}$},
\end{cases}
\end{multline*}

\begin{multline*}
M(+) =
\\
\begin{cases}
\max \{m \in [1, n]: \lambda (a(+)_{[n - m, n]}) = u^-(\lambda (a(-))  \}
&\text{if  $u^-(\lambda (a(+)) \in \mathcal S^-(G_ \mathcal R)$}, \\
0, &\text {if  $u^-(\lambda (a(+)) = \bold 1_{\frak q(\lambda (a(+))}$}.
\end{cases}
\end{multline*}
Let
$$
K(-), K(+), \bar K(-), \bar K(+) \in \Bbb Z, 
$$
$$
K(+)  - K(-) ,  \bar K(+) - \bar K(-) >  2n,
$$
and let
\begin{align*}
b \in A_n(X(G))_{[K(-), K(+)]},  \quad \bar b \in A_n(X(G))_{[\bar K(-),\bar K(+)]},  \tag{2.14}
\end{align*}
be such that
$$
b_{[K(-), K(-) + n ]} = 
\bar b_{[\bar K(-),\bar K(-) + n ]} = a(-),
$$
$$
b_{[K(+) - n, K(+) ]} =
 \bar b_{[\bar K(+) - n,\bar K(+) ]} = a(+),
$$
Set
$$
c = b_{[K(-) + M(-), K(+) - M(+)]}, \quad \bar c =\bar b_{[\bar K(-) + M(-), \bar K(+) - M(+)]}.
$$
We consider the four cases (A),  (B-),  (B+)  and (C):

\noindent
(A) In the case, that $\frak q(\lambda(a(-)))  $ = $\frak q(a(-))$, let 
set
$$
f^+ = h^+(\lambda (c)), f^- = h^-(\lambda (c)), \quad \bar f^+ = \bar  h^+(\lambda (\bar c)),
\bar f^- = \bar h^-(\lambda (\bar c)),
$$
It follows from (2.14), that
$$
f^+, \bar  f^+ \in \mathcal S^+_\mathcal R(G_ \mathcal R), \quad
 f^-, \bar  f^- \in \mathcal S^-_\mathcal R(G_ \mathcal R).
$$
By Lemma A
\begin{align*}
\Gamma( f^+f^- ) = \Gamma( \bar  f^+\bar  f^- ). \tag {2.15}%
\end{align*}

\noindent
(B -) In the case, that 
$$
\frak q(\lambda(a(-)))  \neq\frak q(\lambda (a(+)),  \quad\frak q(\lambda(a(-)))= 
\frak p(\frak q(\lambda(a(-))), \frak q(\lambda(a(+))),
$$
one has
$$
\ell(h^+(\lambda(c)), \ell(h^+(\lambda(c)) \geq \ell_+( \frak q(\lambda (a(-)) ,  \frak q(\lambda (a(+))   ).
$$
Let
$$
h^+, \ \   f^+,, f^-,  \quad \bar h^+, \ \  \bar f^+,,\bar  f^-,
$$
be given by
$$
\ell(h^+) = \ell( \bar h^+)  = \ell_+(\frak q(\lambda(a(-)), \frak q(\lambda(a(+)),
$$
$$
h^+ = u^+(\lambda(c)) f^+, \quad \bar h^+ =   u^+(\lambda(\bar c)) f^+.
$$
By (2.14)
$$
h^+ , \bar h^+,   f^+ , \bar f^+\in \mathcal S^-_\mathcal R(G_ \mathcal R), \quad 
f^- , \bar f^- \in \mathcal S^+_\mathcal R(G_ \mathcal R).
$$
By Lemma B,
\begin{align*}
\Gamma ( h^+f^+f^-  ) = \Gamma (\bar h^+\bar f^+\bar f^-  ). \tag {2.16}%
\end{align*}
(C): In the case, that 
$$
\frak q(\lambda(a(-)))  \neq\frak q(\lambda (a(+)),  
$$
$$
\frak p(\frak q(\lambda(a(-))), \frak q(\lambda(a(+))) \notin
\{\frak q(\lambda(a(-))),\frak q(\lambda(a(+)))  \},
$$
Let
$$
h^+ \bar h^+ ,f^+,\bar f^+, \in \mathcal S^+_\mathcal R(G_\mathcal R), \quad
f^-,\bar f^-,h^- \bar h^- ,\in \mathcal S^-_\mathcal R(G_\mathcal R),
$$
be given by
$$
\ell(  h^+) = \ell( \bar h^+ ) = \ell_+( \frak q(\lambda(a(-))), 
\frak p( \frak q(\lambda(a(-))) , \frak q(\lambda(a(+))))) ,
$$
$$
\ell(  h^-) = \ell( \bar h^- ) = \ell_-(\frak p(  \frak q(\lambda(a(-)))), \frak q(\lambda(a(+)))
, \frak q(\lambda(a(+)))) ,
$$
$$
u^+(c) =h^+f^+,  \ u^+(\bar c) =\bar h^+\bar f^+, \quad
u^-(c) =h^-f^-,  \ u^-(\bar c) =\bar h^-\bar f^-.
$$
By (2.14)
$$
h^+ , \bar h^+,   f^+ , \bar f^+\in \mathcal S^-_\mathcal R(G_ \mathcal R), \quad 
f^- , \bar f^- ,h^+ , \bar h^+\in \mathcal S^+_\mathcal R(G_ \mathcal R).
$$
By Lemma C,
\begin{align*}
\Gamma ( h^+f^+f^- h^- ) = \Gamma (\bar h^+\bar f^+\bar f^- \bar h^- ). \tag {2.17}
\end{align*}

It follows from (2.15 - 17), that in all cases the context of $b$ is equal to the context or $\bar b$.
It is shown, that $X$ has properties $a(n,n), n \in \Bbb N$.
\end{proof}

 %section3
\section{The  $\mathcal R$-graph semigroup associated to an \\  
$\mathcal S(G_\mathcal R(\mathcal P, \mathcal E^-\cup \mathcal E^+ ))$-presentation }

Following the terminology, that was introduced in \cite {HI}, we say for an $\mathcal R$-graph
$ G_\mathcal R(\frak P, \mathcal E^-\cup\mathcal E^+)$
 and an 
$\mathcal S(G_\mathcal R(\frak P, \mathcal E^-, \mathcal E^+))$-presentation $X(\mathcal V,\Sigma, \lambda)$,  that a periodic point $p$ in 
$ X(\mathcal V,\Sigma, \lambda) )$ is neutral, if there exist $I \in \Bbb Z$ and $\frak p \in \frak P$ such that $
\lambda (p_{[I, I +\pi(p) } ) = \bold 1_\frak p,
$
and we say that a periodic point $p$ in $ X(\mathcal V,\Sigma, \lambda) $ has negative (positive) multiplier, if there exist $I \in \Bbb Z$,  such that 
$\lambda (p_{[I, I +\pi(p) } ) \in \mathcal S^-(\frak P, \mathcal E^+\cup \mathcal E^-) (\mathcal S^+(\frak P, \mathcal E^+\cup \mathcal E^-)) .
$
%lemma 3.1
\begin{lemma}
Let $ G_\mathcal R(\frak P, \mathcal E^-\cup\mathcal E^+)$ 
  be an $\mathcal R$-graph, such that
$
\frak P \setminus{\frak P}_\mathcal R^{(1)} \neq \emptyset, 
$
that satisfies Conditions  (II), and let 
$X(\mathcal V,\Sigma, \lambda)$ 
be an
$\mathcal S(G_\mathcal R(\frak P, \mathcal E^-, \mathcal E^+))$-presentation. \ 
Then  a periodic point of $X(\mathcal V,\Sigma, \lambda)$ is in $A(X(\mathcal V,\Sigma, \lambda))$ if and only if it is neutral.
\end{lemma}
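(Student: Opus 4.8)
The plan is to prove the two implications separately. The implication ``neutral $\Rightarrow$ membership in $A(X(\frak P,\Sigma, \lambda))$'' holds for every $\mathcal S_{\mathcal R}(\frak P, \mathcal E^-,\mathcal E^+)$-presentation and uses neither Condition (II) nor (3.1); the converse is where these hypotheses enter.

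For the easy implication, let $p$ be a periodic point with, after replacing $p$ by a shift of it, $\lambda(p_{[0,\pi_p)}) = \bold 1_{\frak p}$, so that $\lambda(p_{[a,b)}) = \bold 1_{\frak p}$ whenever $a,b \in \pi_p\Bbb Z$, $a < b$. I would show $p \in A_{2\pi_p}(X(\frak P,\Sigma,\lambda))$. Fix $i$ and put $n = 2\pi_p$; the window $p_{[i-n,i)}$ contains a full period block $p_{[a,a+\pi_p)}$ with $a \in \pi_p\Bbb Z$ and $a + \pi_p \leq i$, so $\lambda(p_{[i-n,i)}) = \lambda(p_{[i-n,a)})\,\bold 1_{\frak p}\,\lambda(p_{[a+\pi_p,i)})$, while $\lambda(p_{[a+\pi_p,i]})$ is a prefix of a power of $\bold 1_{\frak p}$: there is a block $w$ with $\lambda(p_{[a+\pi_p,i]})\,w = \bold 1_{\frak p}$. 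Given any block $c$ with $c\,p_{[i-n,i)}$ admissible, one has $\lambda(c\,p_{[i-n,i)}) = A\cdot\lambda(p_{[a+\pi_p,i)})$ with $A := \lambda(c\,p_{[i-n,a)})\,\bold 1_{\frak p}$, hence $A \neq 0$ and $A\,\bold 1_{\frak p} = A$; therefore $(A\,\lambda(p_{[a+\pi_p,i]}))\,w = A \neq 0$, so $\lambda(c\,p_{[i-n,i]}) = A\,\lambda(p_{[a+\pi_p,i]}) \neq 0$ and $c\,p_{[i-n,i]}$ is admissible. Thus $p_i \in \omega^+(p_{[i-n,i)})$, and symmetrically $p_i \in \omega^-(p_{(i,i+n]})$, so $p \in A_{2\pi_p}(X(\frak P,\Sigma,\lambda)) \subseteq A(X(\frak P,\Sigma,\lambda))$.

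For the converse, suppose a periodic point $p$ lies in $A(X(\frak P,\Sigma,\lambda))$, fix $n$ with $p \in A_n(X(\frak P,\Sigma,\lambda))$, and set $g_I := \lambda(p_{[I,I+\pi_p)})$. These elements of $\mathcal S_{\mathcal R}(\frak P, \mathcal E^-,\mathcal E^+)$ are non-zero, pairwise conjugate, and satisfy $g_I^K \neq 0$ for all $K$. If some $g_I$ is an idempotent $\bold 1_{\frak p}$ then $p$ is neutral and we are done; otherwise, using the normal form of the non-zero elements of $\mathcal S_{\mathcal R}(\frak P,\mathcal E^-,\mathcal E^+)$ (each being uniquely $f^+\bold 1_{\frak s}g^-$ with $f^+$ a path in $\mathcal E^+$ and $g^-$ a path in $\mathcal E^-$) together with $g_I^K \neq 0$ for all $K$, one reduces, after a shift, to the case that $g := g_0$ is represented by a non-empty cycle $h^+$ in $(\frak P,\mathcal E^+)$ — that is, $p$ has positive multiplier; the negative-multiplier case is symmetric, with Condition (II$+$) in the role of (II$-$). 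I would then aim for a contradiction with $p_i \in \omega^+(p_{[i-n,i)})$ at a suitable $i$: choosing the period origin so that $p$ ``builds up'' at $i$ (so $\lambda(p_i)\in\mathcal E^+$ is the first edge of $h^+$) and $i \in \pi_p\Bbb Z$, one has $\lambda(p_{[i-K\pi_p,i)}) = (h^+)^K$, a long path in $\mathcal E^+$, for every $K$. It then remains to produce a block $c$ of $X(\frak P,\Sigma,\lambda)$ with $c\,p_{[i-n,i)}$ admissible but $c\,p_{[i-n,i]}$ inadmissible: extending $c$ to $c' := c\,p_{[i-K\pi_p,i-n)}$ for large $K$, one wants $\lambda(c')$ to be a path in $\mathcal E^-$ that cancels all of $(h^+)^K$ down to a single edge $e^-\in\mathcal E^-$ with $e^-\,\lambda(p_i)=0$; such an edge exists by (3.1), and the block $c$ is realized using the strong connectivity of $(\frak P,\mathcal E^-)$, Condition (II$-$) to keep the available $\mathcal E^-_{\mathcal R}$-paths under control, and (G5) to lift the prescribed label to a path in $(\mathcal V,\Sigma,\lambda)$. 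Then $\lambda(c'\,p_{[i-n,i)}) = e^- \neq 0$ while $\lambda(c'\,p_{[i-n,i]}) = e^-\lambda(p_i) = 0$, contradicting $p_i \in \omega^+(p_{[i-n,i)})$; hence $p$ is neutral.

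The main obstacle is this last construction: obtaining, for the prescribed $n$, an actual block of the presentation whose label cancels the long $\mathcal E^+$-block $(h^+)^K$ down to an edge of $\mathcal E^-$ incompatible with $\lambda(p_i)$. Condition (3.1) guarantees such an incompatible edge along the cycle $h^+$, and Condition (II) prevents a short cycle inside $\mathcal E^-_{\mathcal R}$ (or $\mathcal E^+_{\mathcal R}$) from obstructing the cancellation. The reduction ``not neutral $\Rightarrow$ positive or negative multiplier'' is a secondary point, handled by the normal-form description of $\mathcal S_{\mathcal R}(\frak P,\mathcal E^-,\mathcal E^+)$ together with the non-vanishing of all powers of $g_I$.
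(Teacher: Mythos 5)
Your overall architecture coincides with the paper's: the easy direction is the observation that every window of length $2\pi_p$ contains a full period with label $\bold 1_{\frak p}$ (the paper asserts $p\in A_{\pi_p}$ with no more detail than you give), and the hard direction reduces a non-neutral periodic point of $A(X)$ to one whose period label is a non-empty cycle $h^+$ in $\mathcal E^+$ (or $\mathcal E^-$) and then derives a contradiction. The paper compresses that contradiction into the single assertion that membership in $A(X)$ forces every edge of the cycle into $\mathcal E^+_{\mathcal R}$, ``contradicting Condition (II) and (3.1)''; your violating-context construction is exactly the contrapositive of that assertion, so the route is the same.

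There is, however, a genuine gap in how you deploy the hypotheses at the decisive point. You claim that (3.1) alone ``guarantees such an incompatible edge along the cycle $h^+$'', i.e.\ an edge $g^+$ of $h^+$ admitting $e^-\in\mathcal E^-$ with $e^-g^+=0$, equivalently $g^+\notin\mathcal E^+_{\mathcal R}$. That is false: a vertex $\frak r=s(g^+)$ can lie in $\frak P^{(1)}\setminus\frak P^{(1)}_{\mathcal R}$ (so (3.1) holds) while the particular edge $g^+$ is still related to all of $\mathcal E^-(\eta(\frak r),\frak r)$, only some other $\tilde e^+$ at $\frak r$ failing. And if \emph{every} edge of $h^+$ lies in $\mathcal E^+_{\mathcal R}$, your $\omega^+$-test is passed by every left context (any $\mathcal E^-$-edge that survives the cancellation necessarily lies in $\mathcal E^-(\eta(\frak r),\frak r)$ and is therefore related to the next edge of $h^+$), so no violating context exists and your argument produces nothing. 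That case is excluded not by (3.1) alone but by Condition (II$+$) together with (3.1) and strong connectivity: all edges in $\mathcal E^+_{\mathcal R}$ puts every vertex of the cycle in $\frak P^{(1)}$, the unique-predecessor structure plus strong connectivity forces the cycle to visit all of $\frak P$, and (II$+$) then forces $\frak P=\frak P^{(1)}_{\mathcal R}$, contradicting (3.1). So the needed case distinction is ``some edge of $h^+$ is outside $\mathcal E^+_{\mathcal R}$ (run your construction) or none is (impossible by (II$+$) and (3.1))''; as written you never confront the second case, you invoke (II$-$) rather than (II$+$) for the positive-multiplier point, and the role you assign to (II) (``prevents a short cycle inside $\mathcal E^-_{\mathcal R}$ from obstructing the cancellation'') is not where the hypothesis is consumed — the cancelling path $f^-$ needs only $\Omega^-_{\mathcal R}(g^+)\neq\emptyset$ for the edges being cancelled, a point which (like the paper) you also leave untreated.
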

\begin{proof}
Let
 \begin{align*}
 p\in A( X(\mathcal V,\Sigma, \lambda)), \tag {3.1}
 \end{align*}
  and let $I \in \Bbb Z$ be such that
$$
\lambda (p_{[I, I +\pi(p) } )\in \mathcal S^-(\frak P, \mathcal E^-\cup \mathcal E^+) \cup \{ \bold 1_\frak p: \frak p \in \frak P \} \cup \mathcal S^+(\frak P, \mathcal E^-\cup \mathcal E^+).  
$$
If
$$
\lambda (p_{[I, I +\pi(p) } )\in \mathcal S^-_\mathcal R(\frak P, \mathcal E^-\cup \mathcal E^+) ,
$$
then it follows from (3.1) that $\lambda (p_{[I, I +\pi(p) } )$ is given by a cycle in the directed graph $G(\frak P, \mathcal E^-_\mathcal R)$  ,  contradicting Condition (II-). For the case that
$$
\lambda (p_{[I, I +\pi(p) } )\in \mathcal S^+(\frak P, \mathcal E^-\cup \mathcal E^+) ,
$$
one has the symmetric argument.

For the converse, note that
$
\lambda (p_{[I, I +\pi(p)) } ) = \bold 1_\frak p,
$
implies
$
p \in A_{\pi(p)}(X(\mathcal V,\Sigma, \lambda)).
$
\end{proof}

Given finite sets $\mathcal E^-$ and $\mathcal E^-$ and a relation $\mathcal R \subset \mathcal E^-  \times  \mathcal E^+  $, we say that $e^- \in  \mathcal E^-$ and 
 $\tilde e^- \in  \mathcal E^-$
 are $\sim(\mathcal R, -)$-equivalent if
 $$
 \{\tilde e^+ \in \mathcal E^+: (e^-, \tilde e^+ ) \in \mathcal R    \} =
 \{\tilde e^+ \in \mathcal E^+: (\tilde e^-, \tilde e^+ ) \in \mathcal R    \}.
 $$
An equivalence relation  $\sim(\mathcal R, +)$ on $\mathcal E^+$ is defined symmetrically. 
Given 
 an $\mathcal R$-graph $\mathcal G_\mathcal R(\frak P, \mathcal E^-\cup \mathcal E^+)$ 
we we construct  an $\mathcal R$-graph 
$\mathcal G_{\bar{\mathcal R}}(\frak P, \bar{\mathcal E}^-\cup \bar{\mathcal E}^+)$,  by setting
\begin{align*}
&\bar{\mathcal E}^-(\frak q, \frak r) = [\mathcal E^- (\frak q, \frak r)]_{\sim(\mathcal R , -)},
\ \ \ \ \
\bar{\mathcal E}^+(\frak q, \frak r) = [\mathcal E^- (\frak q, \frak r)]_{\sim(\mathcal R , +)},
\\
&\bar{\mathcal R}(\frak q, \frak r) = \{ (\bar e^-  , \bar e^+ )\in 
\bar{\mathcal E}^-(\frak q, \frak r) \times \bar{\mathcal E}^+(\frak q, \frak r) : 
\bar{e}^- \times \bar{ e}^+ 
\subset\mathcal R(\frak q, \frak r) \}, \qquad 
\frak q, \frak r \in \frak P.
\end{align*}

We denote by
$ \bar {\mathcal F}^-_{\bar {\mathcal R}}$ the set of edges in $\bar {\mathcal E}^-$, that are the single incoming edges of their target vertices, and we denote by 
$ \bar {\mathcal F}^+_{\bar {\mathcal R}}$ the set of edges in $\bar {\mathcal E}^+$, that are the eingle outgoing edges of their source vertices. Observe that the set $\frak P^{(1)}_\mathcal R$ is  the set of target vertices of the edges in 
$\bar {\mathcal F}^-_{\bar {\mathcal R}}$, which is
equal to the set of source vertices of the edges in 
$\bar {\mathcal F}^+_{\bar {\mathcal R}}$, 
and that $\frak P \setminus \frak P^{(1)}_\mathcal R$ is the set of vertices,  that have at 
least two incoming edges in
 $ \bar {\mathcal E}^-$,
 or, equivalently, that have at least two outgoing edges in $ \bar {\mathcal E}^+$.
 We set
 $$
 \widehat{\frak R} = \frak P \setminus \frak P^{(1)}_\mathcal R,
$$
and we denote 
by 
$\widehat{\frak R}^\bullet$ the set of vertices in $\widehat{\frak R}$, that are source vertices of an edge in  $ \bar {\mathcal F}^-_{\bar {\mathcal R}}$, or, equivalently, that are target vertices of an edge in  
$ \bar {\mathcal F}^+_{\bar {\mathcal R}}$.
For $\frak r \in \frak R_{\bar {\mathcal R}}^\bullet$ we denote by
 $\frak P(\frak r)$ the set of $\frak p \in \frak P$, that are target vertices of a path in the graph $G( \frak P  , \bar {\mathcal F}^-_{\bar {\mathcal R}} )$, that leaves 
$\frak r$, or, equivalently, that are source vertices of a path in the graph 
$G( \frak P  , \bar {\mathcal F}^+_{\bar {\mathcal R}} )$, that enters 
$\frak r$. For 
$\frak r \in  \widehat{\frak R}\setminus\widehat{\frak R}^\bullet$
 we set
$
\frak P(\frak r) =\{ \frak r \}.
$
 We also set
 \begin{align*}
 &
 \bar {\mathcal F}^-_{\bar {\mathcal R}}(\frak r) =
 \{ f^- \in  \bar {\mathcal F}^-_{\bar {\mathcal R}}: t( f^- ) \in   \frak P(\frak r)   \},
\\
&
 \bar {\mathcal F}^+_{\bar {\mathcal R}}(\frak r)= 
 \{ f^+\in  \bar {\mathcal F}^+_{\bar {\mathcal R}}: s( f^+) \in   \frak P(\frak r)   \}, \qquad 
 \frak r \in  \widehat{\frak R}^\bullet,
  \end{align*}
 and have obtained partitions
  $$
  \frak P = \bigcup_{\frak r \in \widehat{\frak R} } \frak P( \frak r ),
  \quad
   \bar {\mathcal E}^- =  
   ( \bar {\mathcal E}^- \setminus  \bar {\mathcal F}^-_{\bar {\mathcal R}}) \cup
    \bigcup_{\frak r \in \widehat{\frak R} }  \bar {\mathcal F}^-_{\bar {\mathcal R}}(\frak r),
  \ \
   \bar {\mathcal E}^+ =  
   ( \bar {\mathcal E}^- \setminus  \bar {\mathcal F}^+_{\bar {\mathcal R}}) \cup
    \bigcup_{\frak r \in \widehat{\frak R} }  \bar {\mathcal F}^-_{\bar {\mathcal R}}(\frak r).
  $$
  
For all $\frak r \in \frak R_{\bar {\mathcal R}}^\bullet$ the directed graph 
$G(  \{\frak r  \} \cup \frak P(\frak r)  , \bar {\mathcal F}^-_{\bar {\mathcal R}}(\frak r)  )$ is an outward directed tree, and the directed graph 
$G( \{\frak r  \} \cup \frak P(\frak r)  , \bar {\mathcal F}^+_{\bar {\mathcal R}}(\frak r)  )$ is an inward directed tree. The directed trees 
$G(  \{\frak r  \} \cup \frak P(\frak r)  , \bar {\mathcal F}^-_{\bar {\mathcal R}}(\frak r)  )$
and $G( \{\frak r  \} \cup \frak P(\frak r)  , \bar {\mathcal F}^+_{\bar {\mathcal R}}(\frak r)  )$ are reversals of one another, and the set 
$\frak L( \frak r)$  of their leaves o 
is given by the set of vertices in $\frak P(\frak r)$, that are source vertices of an edge in 
$\bar{\mathcal E}^- \setminus \bar{\mathcal F}^-$, or, equivalently, that are target vertices of an edge in $\bar{\mathcal E}^+ \setminus \bar{\mathcal F}^+$.
To a vertex 
$\frak r \in \frak R_{\bar {\mathcal R}} \setminus \frak R_{\bar {\mathcal R}}^\bullet$
we associate the degenerate tree $G(\{\frak r, \emptyset\})$ with its leaf $ \frak r$.
For $\frak r \in \widehat{\frak R} $, if the source (target) vertex of an edge 
$e^- \in \bar {\mathcal E}^- \setminus  \bar {\mathcal F}^-_{\bar {\mathcal R}} $ 
( $e^+ \in \bar {\mathcal E}^+ \setminus  \bar {\mathcal F}^-_{\bar {\mathcal R}} )$ is in $\frak P(\frak r)$, then this source (target) vertex is necessarily  
in $\frak L(\frak r)$.
 
 From the $\mathcal R$-graph  
  $\mathcal G_{\bar{\mathcal R}}(\frak P, \bar{\mathcal E}^-\cup \bar{\mathcal E}^+)$
 we construct an $\mathcal R$-graph 
 $\mathcal G_{\widehat{\mathcal R}}(\frak P, \widehat{\mathcal E}^-\cup \hat{\mathcal E}^+)$.
 We postulate that there are bijections
 $$
 e^- \to \widehat {e}^- \in \widehat{\mathcal E}^- 
 \ ( e^- \in \bar{\mathcal E}^- \setminus  \bar{\mathcal F}^- ), \quad 
 e^+ \to \widehat {e}^+ \in \widehat{\mathcal E}^+ \ 
 ( e^+\in \bar{\mathcal E}^- \setminus  \bar{\mathcal F}^- ).
 $$
 We specify the source and target mappings of the graph $\mathcal G_{\widehat{\mathcal R}}(\frak P, \widehat{\mathcal E}^-\cup \hat{\mathcal E}^+)$
 by setting
 $$
 t( \widehat {e}^-  ) =  t( e^- ), \qquad   e^-\in   \bar{\mathcal E}^-,
  $$
  $$
 s( \widehat {e}^+  ) =  s( e^+ ), \qquad   e^+\in   \bar{\mathcal E}^-,
  $$
and by assigning for $\frak r \in \widehat {\frak R}$, to an edge $e^-\in   \bar{\mathcal E}^- $, such that  
$s( \widehat {e}^-  )\in \frak L(\frak r)$, the vertex $\frak r$  as source vertex, and to an edge  
  $e^+\in   \bar{\mathcal E}^+ $,  
 such that $t( \widehat {e}^-  )\in \frak L(\frak r)$, the vertex $\frak r$  as target vertex. We set
$$
\widehat{\mathcal R}=
 \{( \widehat {e}^- ,\widehat {e}^+  ) \in \widehat{\mathcal E}^- \times 
  \widehat{\mathcal E}^+ : (  {e}^- , {e}^+  )  \in \bar{\mathcal R} \cap
   \left(( \bar{\mathcal E}^- \setminus  \bar{\mathcal F}^- ) \times  \bar{\mathcal E}^+\setminus  \bar{\mathcal F}^+ ) )\right)  \}.
$$
In the $\mathcal R$-graph $\mathcal G_{\widehat{\mathcal R}}(\frak P, \widehat{\mathcal E}^-\cup \hat{\mathcal E}^+)$
every vertex has at least two incoming edges in $\widehat{\mathcal E}^-$, which means that every vertex in $\widehat{\mathcal E}^+$ has at least two outgoing edges in $\widehat{\mathcal E}^+$.

We introduce additional notation for subshifts. We denote the set of periodic points in 
$A(X)$ by $P(A(X))$. The subshifts $X\subset \Sigma ^\Bbb Z$:
that we consider in this paper are such that $P(A(X))$ is dense in $X$.  We introduce a preorder relation $\gtrsim$ into the set $P(A(X))$ where for $q, r \in  P(A(X))$ means that there exists a point in $A(X)$ that is left asymptotic to the orbit of $q$ and right asymptotic to the orbit of $r$. The equivalence relation on $P(A(X))$ that results from the preorder relation $\gtrsim$ we denote by $\approx$.

The proof of the following theorem is similar to a proof for Markov-Dyck shifts 
 in \cite {KM2}.

%theorem3.2
\begin{theorem}
Let $\mathcal G_\mathcal R(\frak P, \mathcal E^-, \mathcal E^+)$   be an $\mathcal R$-graph, that satisfies conditions (I) and (II), and let
\begin{align*}
\frak P \setminus{\frak P}_\mathcal R^{(1)} \neq \emptyset.
\end{align*}
Let $X(\mathcal V,\Sigma, \lambda)$ be an $\mathcal S_\mathcal R(\frak P, \mathcal E^-, \mathcal E^+)$-presentation,  let $q, r$ be neutral periodic points of $X(\mathcal V,\Sigma, \lambda)$, and  let $ \frak q, \frak r \in \frak P$,  and
 $I(q), I(r) \in \Bbb Z$ be such that
 $$
p_{( I(q),I(q)+  \pi(q) ]} = \bold 1_\frak q, \quad
p_{( I(r), I(r)+  \pi(r) ]} = \bold 1_\frak r.
$$
 Then  $ q \approx r $ if and only if  $\frak q$ and $\frak r$ are in the same element of the partition $\{\frak P (\frak p): \frak p \in \widehat{\mathcal R} \}$.
 \end{theorem}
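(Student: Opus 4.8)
The plan is to translate the dynamical relation $\gtrsim$ between neutral periodic points into a combinatorial statement about the forest $(\frak P,\widehat{\mathcal E}^-_\mathcal R(1))$ and then to read the assertion off from it. Recall that $\approx$ is the equivalence relation generated by the preorder $\gtrsim$, that $(\frak P,\widehat{\mathcal E}^-_\mathcal R(1))$ is a disjoint union of rooted trees with root set $\frak P\setminus\frak P^{(1)}_\mathcal R$, that $(\frak P,\widehat{\mathcal E}^+_\mathcal R(1))$ is its reversal, and that for a root $\frak p$ the block $\frak P_\frak p$ of the partition is the vertex set of the tree rooted at $\frak p$, i.e. the set of $\frak v$ reachable from $\frak p$ by a directed path (the empty path allowed) in $(\frak P,\widehat{\mathcal E}^-_\mathcal R(1))$. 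Hence $\frak q$ and $\frak r$ lie in a common block of $\{\frak P_\frak p\}$ if and only if there is a vertex from which both $\frak q$ and $\frak r$ are reachable by directed paths in $(\frak P,\widehat{\mathcal E}^-_\mathcal R(1))$. Since ``lying in a common block'' is an equivalence relation, the theorem will follow from the two claims: (i) if there is a directed path from $\frak q$ to $\frak r$ in $(\frak P,\widehat{\mathcal E}^-_\mathcal R(1))$, then $q\gtrsim r$ and $r\gtrsim q$; and (ii) if $q\gtrsim r$, then $\frak q$ and $\frak r$ are reachable from a common vertex in $(\frak P,\widehat{\mathcal E}^-_\mathcal R(1))$.

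For (i) I would argue by construction. Lift a directed path from $\frak q$ to $\frak r$ in $\widehat{\mathcal E}^-_\mathcal R(1)$ edgewise to a path $g^-=e^-_1\cdots e^-_k$ in $\mathcal E^-$. Since every edge of $\widehat{\mathcal E}^-_\mathcal R(1)$ has its target in $\frak P^{(1)}_\mathcal R$, where $\mathcal R$ is the full rectangle between the vertex and its unique predecessor, each $e^-_i$ lies in $\mathcal E^-_\mathcal R$ and admits a companion edge $e^+_i\in\mathcal E^+_\mathcal R$ with $e^-_ie^+_i=\bold 1_{s(e^-_i)}$; putting $g^+=e^+_k\cdots e^+_1$ one obtains a path in $\mathcal E^+_\mathcal R$ with $g^-g^+=\bold 1_\frak q$, $\bold 1_\frak q g^-=g^-=g^-\bold 1_\frak r$ and $\bold 1_\frak r g^+=g^+=g^+\bold 1_\frak q$. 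Using (G 2) pick cycles in $(\mathcal V,\Sigma)$ with labels $\bold 1_\frak q$ and $\bold 1_\frak r$, realizing the orbits of $q$ and $r$, and using (G 5) pick a path in $(\mathcal V,\Sigma)$ with label $g^-$ joining the base points of those cycles. Splicing infinitely many copies of the $\bold 1_\frak q$-cycle, then the $g^-$-path, then infinitely many copies of the $\bold 1_\frak r$-cycle yields a point $x\in X(\frak P,\Sigma,\lambda)$ (all finite window labels being nonzero, using (G 4)), left asymptotic to the orbit of $q$ and right asymptotic to that of $r$. That $x\in A(X(\frak P,\Sigma,\lambda))$ for $n$ large follows from the Lemma characterizing which periodic points of $X(\frak P,\Sigma,\lambda)$ lie in $A(X(\frak P,\Sigma,\lambda))$ (the two periodic tails are neutral) together with the description, obtained in the proof of Theorem 2.3, of the labels of blocks satisfying the $\omega^\pm$-conditions: every window of $x$ has a label of the admissible form $h^+\bold 1_\frak p h^-$, with $h^+$ empty and $h^-$ a subpath of $g^-$. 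Thus $q\gtrsim r$; running the same construction with $g^+$ in place of $g^-$ and the roles of $q$ and $r$ interchanged gives $r\gtrsim q$.

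For (ii) I would take a point $x\in A_n(X(\frak P,\Sigma,\lambda))$ left asymptotic to the orbit of $q$ and right asymptotic to that of $r$, and choose $Q_-<Q_+$ with $Q_+-Q_-$ large so that $x_{(Q_-,Q_-+n]}$ lies in the neutral periodic tail of $q$ at a place where its label is $\bold 1_\frak q$, and $x_{(Q_+-n,Q_+]}$ in that of $r$ where its label is $\bold 1_\frak r$. Applying the block description from the proof of Theorem 2.3 to $x_{(Q_-,Q_+]}$, and using neutrality of $q$ and $r$ to see that its length-$n$ boundary pieces collapse to $\bold 1_\frak q$ and $\bold 1_\frak r$, the nonzero element $F:=\lambda(x_{(Q_-,Q_+]})$ takes the form $F=\bold 1_\frak q\,g^+\bold 1_\frak p\,g^-\bold 1_\frak r$ with $g^+$ a path in $\mathcal E^+_\mathcal R$, $g^-$ a path in $\mathcal E^-_\mathcal R$, and some $\frak p\in\frak P$. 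Reading off from $F\ne 0$ that $\bold 1_\frak q g^+=g^+$ and $g^-\bold 1_\frak r=g^-$ shows $g^+$ to be a path in $\mathcal E^+_\mathcal R$ joining $\frak q$ and $\frak p$ and $g^-$ a path in $\mathcal E^-_\mathcal R$ joining $\frak p$ and $\frak r$. The decisive step is then to show, using Conditions (I), (II) and (III), that these two paths run inside $\frak P^{(1)}_\mathcal R$ except possibly at their endpoint $\frak p$ and at $\frak q$ or $\frak r$ when the latter are roots (in which case the corresponding path is forced to be empty); this is carried out by the method of the proof of Lemma 2.2 — an excursion through a vertex of $\frak P^{(1)}\setminus\frak P^{(1)}_\mathcal R$ would, upon choosing companion edges and invoking the irreducibility of the partitioned graph, produce a cycle forbidden by Condition (II) or a configuration forbidden by Condition (I) or (III). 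Once this is known, $g^+$ projects to a directed path in $(\frak P,\widehat{\mathcal E}^+_\mathcal R(1))$ between $\frak q$ and $\frak p$ and $g^-$ to one in $(\frak P,\widehat{\mathcal E}^-_\mathcal R(1))$ between $\frak p$ and $\frak r$; since these two graphs are reversals of one another, $\frak q$ and $\frak r$ are both reachable from $\frak p$ in $(\frak P,\widehat{\mathcal E}^-_\mathcal R(1))$, which is (ii).

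I expect the main obstacle to be precisely this decisive step of (ii): extracting the clean normal form $F=\bold 1_\frak q g^+\bold 1_\frak p g^-\bold 1_\frak r$ from the bare membership $x\in A_n(X)$, which requires careful bookkeeping of the length-$n$ boundary blocks in the description of Theorem 2.3 and of the neutrality of $q$ and $r$, and then ruling out, via Conditions (I), (II), (III) and the argument of Lemma 2.2, any excursion of the transition paths $g^\pm$ through the ``bad'' vertices $\frak P^{(1)}\setminus\frak P^{(1)}_\mathcal R$, while separately disposing of the case in which $\frak q$ or $\frak r$ is itself a root of its tree.
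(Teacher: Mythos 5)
Your sufficiency argument (i) is essentially the paper's: you lift a tree path in $\widehat{\mathcal E}^-_\mathcal R(1)$ to $\mathcal E^-_\mathcal R$, choose companion edges in $\mathcal E^+_\mathcal R$ using that the relation at each vertex of $\frak P^{(1)}_\mathcal R$ is the full rectangle, and splice; routing through a common ancestor and invoking transitivity of $\approx$ is a harmless repackaging of the paper's construction through the root $\frak p$.

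The necessity direction, however, contains a genuine gap: your claim (ii) is false as stated. Condition (I) permits a vertex $\frak v\in\frak P^{(1)}\setminus\frak P^{(1)}_\mathcal R$ with $\mathcal E^+(\mathcal R(\eta(\frak v),\frak v))\neq\emptyset$ (it only forces $\mathcal E^-(\mathcal R(\eta(\frak v),\frak v))=\emptyset$ in that case). An edge $e^+\in\mathcal E^+(\mathcal R(\eta(\frak v),\frak v))\subset\mathcal E^+_\mathcal R$ is absorbed on the left by every element of $\mathcal E^-(\eta(\frak v),\frak v)$, so the point obtained by splicing a $\bold 1_{\frak v}$-tail, a block with label $e^+$, and a $\bold 1_{\eta(\frak v)}$-tail satisfies the $\omega^{\pm}$-conditions and lies in $A(X)$; hence $v\gtrsim v'$ for neutral representatives $v$ of $\frak v$ and $v'$ of $\eta(\frak v)$. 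But $\frak v\notin\frak P^{(1)}_\mathcal R$ is the root of its own tree and $\eta(\frak v)$ is not among its descendants, so $\frak v$ and $\eta(\frak v)$ lie in different blocks of the partition. Thus no appeal to Conditions (I), (II), (III) or to the method of Lemma 2.2 can establish the one-directional implication ``$q\gtrsim r$ implies same block'': it is not a true statement, only its conjunction with $r\gtrsim q$ is. This is exactly how the paper's proof proceeds: from $q\gtrsim r$ together with $r\gtrsim q$ it extracts two transition labels, one contributing a path in $\mathcal E^+_\mathcal R$ climbing the $\eta$-chain from $\frak q$ and one contributing a path in $\mathcal E^-_\mathcal R$ descending the same $\eta$-chain back to $\frak q$ (the chain of vertices is unique since each vertex of $\frak P^{(1)}$ has a single predecessor); the nonvanishing of the product of the two labels around this loop forces both $\mathcal E^+(\mathcal R(\eta(\frak v),\frak v))$ and $\mathcal E^-(\mathcal R(\eta(\frak v),\frak v))$ to be nonempty at each intermediate vertex $\frak v$, and Condition (I) then yields $\frak v\in\frak P^{(1)}_\mathcal R$, so that the paths descend to $\widehat{\mathcal E}^{\pm}_\mathcal R(1)$. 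Your argument must be restructured to use both witnesses simultaneously; as written, the ``decisive step'' you defer cannot be carried out.
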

\begin{proof}
Assume that  $ p \approx q $. Let $N\in \Bbb N$ and
 \begin{align*}
x^{(q,r)},x^{(r,q)} \in A_N(X(\mathcal V,\Sigma, \lambda)), \tag {3.2}
 \end{align*} 
and let
$$
I_-(q,r), I_+(q,r),\quad I_-(r,q), I_+(r,q) \in \Bbb Z,
$$
$$
I_-(q,r) < I_+(q,r) ,  \quad   I_-(r,q)   <   I_+(r,q)  ,  
 $$
 be such that
 $$
 x^{(q,r)}_{(- \infty, I_-(q,r) ]} =q_{(- \infty, I(q) ]},
 \quad
 x^{(q,r)}_{(I_+(q,r) ,\infty)}=r_{(I(r) ,\infty)},
 $$
  $$
 x^{(r,q)}_{(- \infty, I_-(r,q)]}=r_{(- \infty, I(r)]},
 \quad
 x^{(r, q)}_{(I_+(r,q),\infty)}=q_{( I(q),\infty)}.
 $$
 By (3.2)
  \begin{align*}
 ( x^{(q,r)}_{(- \infty, I_+(q,r) ]}, r_{(I(r)   , I(r) +N\pi (r)   ]},x^{(r,q)}_{( I_-(q,r),\infty)})
  \in X(\mathcal V,\Sigma, \lambda), \tag{3.3}
  \end{align*}
  and
  \begin{align*}
 ( x^{(r,q)}_{(- \infty, I_+(r,q) ]}, q_{(I(q)   , I(q) +N\pi (q)   ]},x^{(q,r)}_{( I_-(r,q),\infty)})
  \in X(\mathcal V,\Sigma, \lambda). \tag {3.4}
   \end{align*}
  As a consequence of (3.2) there are also
  $$
  \frak p{(q,r)}\in \frak P,\quad J_-(q,r), J_+(q,r)\in \Bbb N,
$$
and
$$
e^+_{j_+(q,r) }(q,r)\in \mathcal E^+_\mathcal R, \quad J_+(q,r)\geq j_+(q,r)  > 0,
$$
$$
e^-_{j_-(q,r) }(q,r)\in \mathcal E^-_\mathcal R,\quad 0 < j_-(q,r)  \leq J_-(q,r),
$$
  such that
   \begin{multline*}
  \lambda( x^{(q,r)}_{(I_-(q. r), I_+(q,r) ]} ) =  \tag {3.5}\\
  \bold1_\frak q(\prod_{J_+(q,r)\geq j_+(q,r) >0}e^+_{j_+}(q,r))\bold1_{\frak p^{(q,r)}}(\prod_{0< j_-(q,r) \leq J_-(q,r)}e^-_{j_-(q,r)}(q,r))\bold1_\frak r,
   \end{multline*}
  and there are also
  $$
  \frak p{(r,q)}\in \frak P,\quad J_-(r,q), J_+(r,q)\in \Bbb N,
$$
and
$$
e^+_{j_+(r,q) }(q,r)\in \mathcal E^+_\mathcal R, \quad J_+(r,q)\geq j_+(r,q)  > 0,
$$
$$
e^-_{j_-(r,q) }(q,r)\in \mathcal E^-_\mathcal R,\quad 0 < j_-(r,q)  \leq J_-(r,q),
$$
  such that
    \begin{multline*}
     \lambda(x^{(r.q)}_{(J_-(r.q), J_+(r.q) ]}  )  = \tag {3.6} \\
     \bold1_\frak r(\prod_{J_+(r,q)\geq j_+(r,q) >0}e^+_{j_+}(r,q))\bold1_{\frak p^{(r,q)}}(\prod_{0< j_-(r,q) \leq J_-(r,q)}e^-_{j_-(r,q)}(r,q))\bold1_\frak q.
   \end{multline*}
 By (3.3), and in case that $ J_+(q,r)\geq J_+(r,q)$
    \begin{align*}
   & \bold1_\frak q(\prod_{J_+(q,r)\geq j_+(q,r) >0}e^+_{j_+}(q,r))\bold1_{\frak p^{(q,r)}}(\prod_{0< j_-(q,r) \leq J_-(q,r)}e^-_{j_-(q,r)}(q,r))\bold1_\frak r
   \\
   & \bold1_\frak r(\prod_{J_+(r,q)\geq j_+(r,q) >0}e^+_{j_+}(r,q))\bold1_{\frak p^{(r,q)}}(\prod_{0< j_-(r,q) \leq J_-(r,q)}e^-_{j_-(r,q)}(r,q))\bold1_\frak q =
   \\
  &  \bold1_\frak q(\prod_{J_+(q,r)\geq j_+(q,r) >0}e^+_{j_+}(q,r))\bold1_{\frak p^{(q,r)}}
  \\
 & (\prod_{0< j_-(q,r) \leq J_-(q,r)-J_+(r,q)}e^-_{j_-(q,r)}(q,r))\bold1_{\frak p^{(r, q)}}
 (\prod_{0< j_-(r,q) \leq J_-(r,q)}e^-_{j_-(r,q)}(r,q))\bold1_\frak q \neq 0.
 \end{align*}
  From this it follows  by Condition (I) that $$ (\widehat{e}^+_{j_+}(q,r))_{J_+(q,r)\geq j_+(q,r) >0} $$ is a path in
  $\widehat { \mathcal E}^+(1)$ 
  from $ \frak q$ to $\frak p^{(q,r)} $, and 
 $$((\widehat{e}^-)_{0< j_-(q,r) \leq J_-(q,r)-J_+(r,q)}  ),(\widehat{e}^-)_{0< j_-(r,q) \leq J_-(r,q)}  ))$$
   is a path in 
   $\widehat {\mathcal E}^-(1)$ 
   from $\frak p^{(q,r)} $ to $\frak r$  that passes through $\frak p^{(r,q)} $ 
    and one sees that $\frak p^{(q,r)}, \frak p^{(r,q)}$ and $\frak q$ are in the same element of the partition $\{\frak P _\frak p: \frak p\in \frak P \setminus{\frak P}_\mathcal R^{(1)}  \}$.
  By the same argument for the case that $ J_+(q,r)\leq J_+(q,r)$, and by the symmetric argument 
  that uses (3.6), one sees that in fact $ \frak p^{(q,r)}, \frak p^{(r,q)}$ and $ \frak q$ and $\frak r$ 
   are in the same element of the partition $\{\frak P _\frak p: \frak p\in \frak P \setminus{\frak P}_\mathcal R^{(1)}  \}$.
   
   For the proof of the converse let $p \in  \frak P \setminus{\frak P}_\mathcal R^{(1)}$ and let $\frak q, \frak r \in \frak P _\frak p$.  There is a  path 
   $ (\widehat{e}^+_{j_+(q)})_{J_+(q)\geq j_+(q) >0} $
   in $ \widehat {\mathcal E}^+(1)$ from  $\frak q$ to $\frak p$,
  a path
   $ (\widehat{e}^-_{j_-}(r))_{0<j_-(r) \leq J_-(r) } $
  in $ \widehat {\mathcal E}^-(1)$ from $\frak p$ to $\frak r$,
  a  path 
   $ (\widehat{e}^+_{j_+(r)})_{J_+(r)\geq j_+(r) >0} $
   in $ \widehat {\mathcal E}^+(1)$ from  $\frak r$ to $\frak p$,
  and a path
   $ (\widehat{e}^-_{j_-}(q))_{0<j_-(r) \leq J_-(q) } $
  in $ \widehat {\mathcal E}^-(1)$ from $\frak p$ to $\frak q$.
 Choose a vertex $V(p) \in \mathcal V(\frak p)$ and choose 
 $$
  {e}^+_{j_+(q)} \in \widehat{e}^+_{j_+(q)},\quad J_+(q)\geq j_+(q) >0 
  $$
 and choose a path 
 $b^+(q)$ in the directed graph $G(\mathcal V, \Sigma)$ from the target vertex of $q_{(-\infty,I(q)]}$  to $V(p)$, such that
 $$
 \lambda( b^+(q)  ) = \prod_{J_+(q)\geq j_+(q) >0}  {e}^+_{j_+(q)}, 
 $$
 and choose 
 $$ {e}^+_{j_-(r)} \in \widehat{e}^+_{j_-(r)},\quad 0<j_-(r) \leq J_-(r)  $$
and also a path $b^-(r)$  from $V(p)$  to the source vertex of $q_{(I(r), \infty)}$, such that
 $$
  \lambda( b^-(r)  ) = \prod_{0<j_-(r) \leq J_-(r) }{e}^-_{j_-}(r),
 $$
 choose 
 $$ {e}^+_{j_+(r)} \in \widehat{e}^+_{j_+(r)},\quad J_+(q)\geq j_+(r) >0 $$
 and choose a path 
 $b^+(q)$ in the directed graph $G(\mathcal V, \Sigma)$ from the target vertex of $q_{(-\infty,I(q)]}$  to $V(p)$, such that
 $$
 \lambda( b^+(r)  ) = \prod_{J_+(r)\geq j_+(r) >0}  {e}^+_{j_+(r)},
 $$
 and choose 
 $$ {e}^-_{j_-(q)} \in \widehat{e}^-_{j_-(q)},\quad 0<j_-(q) \leq J_-(q)  $$
and also a path $b^-(q)$  from $V(p)$  to the source vertex of $q_{(I(q), \infty)}$, such that
 $$
  \lambda( b^-(q)  ) = \prod_{0<j_-(q) \leq J_-(q) }{e}^-_{j_-}(q),
 $$
 Then
   \begin{align*}
 &( q_{(- \infty , I(q)]  } , b^+(q) , b^-(r) , r_{(I(r) ,   \infty ) }  ) \in A(X(\mathcal V,\Sigma, \lambda  )),\\
& ( r_{( - \infty , I(r) ]     } , b^+(r) , b^-(q) , q_{ (I(q)    ,  \infty )}  ) \in A(X(\mathcal V,\Sigma, \lambda )).
\qed
 \end{align*}
\renewcommand{\qedsymbol}{}
\end{proof}

We  recall at this point the construction of the associated semigroup. For a property $( A)$ subshift $X \subset \Sigma ^{ \Bbb Z}$ we denote by  $Y(X)$ the set of points in $X$
that are left asymptotic to a  point in $P(A(X))$ and also right-asymptotic to a  point 
in $P(A(X))$ .  Let $y, \tilde{y}\in Y(X),$ let $y$ be left asymptotic to $q \in P(A(X))$ and right 
asymptotic to $r \in P(A(X) ) ,$ and let 
$\tilde {y}$ be left asymptotic to $ \tilde{q} \in P(A(X))$ and right
asymptotic to $ \tilde{r} \in P(A(X))$. Given that 
$X$ has the properties $(a, n, H_{n}), n \in \Bbb N,$ we say
that $y$ and $\tilde {y}$ are equivalent, $y \approx \tilde {y}$, if $q \approx
\tilde{q}$ and $ r \approx \tilde{r}$, and  
if for $n \in \Bbb N$ such that $ q, r, \tilde {q}, \tilde {r} 
\in P(A_n(X))$ and  for $I, J, \tilde
{I}, \tilde {J} \in \Bbb Z,  I < J, \tilde {I}< \tilde
{J},$ such that
$$
y_{(- \infty, I]} = q_{(- \infty, 0]},\quad  y_{(  J,\infty)} = r_{(  0,\infty)},
$$
$$
\tilde{y}_{(- \infty, \tilde{I}]} = \tilde{q}_{(- \infty, 0]},\quad \tilde{ y}_{(  \tilde{J},\infty)} = \tilde{r}_{( 
 0,\infty)},
$$
one has for $h\ge3 H_{n}$ and for 
$$ 
a \in X_{(I  -  h,J + h]}, \quad
\tilde {a} \in X_{(\tilde {I} -  h,\tilde {J} + h]}, 
$$
such that  
$$
a _{(I-  H_{n} ,  J + H_{n} ]} = y_{(I - H_{n} ,  J + H_{n} ]} ,\quad
 \tilde {a} _{(\tilde {I} -  H_{n} ,  \tilde {J} + H_{n} ]} =
\tilde{y}_{(\tilde {I} -  H_{n} ,  \tilde {J} + H_{n} ]},
$$
$$
a _{(I - h ,  I  - h + H_{n})} = \tilde {a} _{(\tilde {I} -
h , \tilde {I}  - h+ H_{n} )},
$$
$$
a _{(J + h- H_{n} ,  J  + h ]} =
 \tilde {a} _{(\tilde {J }+ h - H_{n}  , 
\tilde { J}  + h ]},
$$
and such that
$$
a_{(I - h, I]} \in A_{n}(X)_{(I - h, I]} , \quad
\tilde{a}_{( \tilde{J}- h, \tilde{I}]}\in A_{n}(X)_{( \tilde{J}- h, \tilde{I}]}\ ,
$$
$$
 a_{(J , J+h]} \in A_{n}(X)_{(J , J+h]}  , \quad
  \tilde{a}_{( \tilde{J}  , \tilde{J}+h]}  \in A_{n}(X)_{( \tilde{J}  , \tilde{J}+h]} ,
$$
that $a$ and $\tilde {a}$ have the same context.
To give $[Y(X)]_{\approx}$ the
structure of a semigroup, let $u, v \in Y(X)$, let $u$ be right asymptotic to
$q \in P(A(X))$ and let  $v$ be left asymptotic to
 $r \in P(A(X))$. 
If here 
$q \gtrsim r$, then $[u]_{\approx}[v]_{\approx}$ is set equal to $[y]_{\approx}$
where $y$ is any point in $Y$ such that there are
$n \in \Bbb N, I, J ,\hat {I},  \hat {J} \in \Bbb Z , I < J, \hat 
{I}<   \hat {J}, $ such that $ q, r \in A_{n}(X),$  and such that
$$
u_{(I, \infty)} = q_{(I, \infty)}, \quad
v_{(-\infty, J]} = r_{(-\infty, J]},
$$  
$$
y_{(- \infty,\hat{I} + H_{n}]}= u_{(- \infty,I + H_{n}]} ,\quad
y_{ (\hat{J}  - H_{n}, \infty)}= v_{ (J - H_{n}, \infty)},
$$
and 
$$
y_{(\hat{I}  , \hat{J}  ]}\in A_{n}(X)_{(\hat{I}  , \hat{J}  ]},
$$
provided that such a point $y$ exists. If such a point $y$ does not exist,  
$[u]_{\approx}[v]_{\approx}$ is set equal to zero.
Also, in the case that
one does not have $q \gtrsim r,[u]_{\approx}[v]_{\approx}$ is set equal to zero.

 %Theorem3.3.
\begin{theorem}
Let $ G_\mathcal R(\frak P, \mathcal E^-\cup \mathcal E^+)$   be an $\mathcal R$-graph, such that
$$
\frak P \setminus{\frak P}_\mathcal R^{(1)} \neq \emptyset,
$$
that satisfies conditions (I) and (II), and let $X(\mathcal V,\Sigma, \lambda)$ be an 
$\mathcal S(G_\mathcal R(\frak P, \mathcal E^+\cup \mathcal E^-))$-presentation. Then
the semigroup
$\mathcal S(  G
_{ \widehat{\mathcal R}}
(\widehat{\frak P},\widehat{ \mathcal E}^+\cup \widehat{\mathcal E}^-) )$
 is associated to $X(\mathcal V,\Sigma, \lambda)$, and  
 $X(G_\mathcal R(\frak P, \mathcal E^+\cup \mathcal E^-))$ has an 
 $\mathcal S(   G_{ \widehat{\mathcal R}}
(\widehat{\frak P},\widehat{ \mathcal E}^+\cup \widehat{\mathcal E}^-)  )$-presentation.
\end{theorem}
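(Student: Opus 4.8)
The plan is to realise the associated semigroup $[Y(X)]_\approx$ of $X=X(\frak P,\Sigma,\lambda)$ explicitly as $\mathcal S_{\widetilde{\mathcal R}}(\widetilde{\frak P},\widetilde{\mathcal E}^-,\widetilde{\mathcal E}^+)$, and then to produce an $\mathcal S_{\widetilde{\mathcal R}}(\widetilde{\frak P},\widetilde{\mathcal E}^-,\widetilde{\mathcal E}^+)$-presentation of $X$ by relabelling the defining graph. By Theorem (2.3) the subshift $X$ has Property $(A)$, so $[Y(X)]_\approx$ is defined. The device that organises everything is the surjective homomorphism of semigroups with zero
\[
 \red\colon \mathcal S_\mathcal R(\frak P,\mathcal E^-,\mathcal E^+)\longrightarrow \mathcal S_{\widetilde{\mathcal R}}(\widetilde{\frak P},\widetilde{\mathcal E}^-,\widetilde{\mathcal E}^+)
\]
attached to the construction of Section 3: on generators $\red$ identifies $\sim(\mathcal R,\pm)$-equivalent edges, then sends every edge of $\widehat{\mathcal E}^\pm_\mathcal R(1)$ to the identity $\mathbf 1_{\frak p}$ of the root $\frak p\in\widetilde{\frak P}$ of the tree of $(\frak P,\widehat{\mathcal E}^\mp_\mathcal R(1))$ containing it, and sends each remaining edge to the corresponding edge of $\widetilde{\mathcal E}^\pm$. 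I would first check, running through the defining relations of $\mathcal S_\mathcal R$ and using that Conditions (I) and (II) exhibit $(\frak P,\widehat{\mathcal E}^-_\mathcal R(1))$ as the forest of trees rooted at $\widetilde{\frak P}=\frak P\setminus\frak P^{(1)}_\mathcal R$ described before the theorem, that $\red$ is well defined and onto, and that it carries every nonzero element of $\mathcal S_\mathcal R$ — each $\mathbf 1_{\frak p}$ and each reduced product $\mathbf 1_{\frak q}(\prod e^+_j)\mathbf 1_{\frak p}(\prod e^-_j)\mathbf 1_{\frak r}$ — to a nonzero element of $\mathcal S_{\widetilde{\mathcal R}}$.

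Next I would define the isomorphism. For $y\in Y(X)$, left asymptotic to a periodic point $q$ and right asymptotic to a periodic point $r$ of $A(X)$, Lemma (3.2) yields $\frak q,\frak r\in\frak P$ with $\lambda$ of a period of $q$, resp.\ of $r$, equal to $\mathbf 1_{\frak q}$, resp.\ $\mathbf 1_{\frak r}$; since these are idempotents absorbing on the appropriate side, $\lambda(y_{(I,J]})\in\mathcal S_\mathcal R$ is, for $I$ small and $J$ large, a fixed nonzero element $F(y)$ with left unit $\mathbf 1_{\frak q}$ and right unit $\mathbf 1_{\frak r}$. Put $\Phi([y]_\approx)=\red(F(y))$ and $\Phi(0)=0$. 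The core step is the equivalence
\[
 y\approx y'\iff \red(F(y))=\red(F(y')),
\]
which I would prove from Property $(A)$ together with Theorem (3.3) and Lemmas (2.1) and (2.2). Theorem (3.3) identifies the $\approx$-class of $q$ with the root $\rho(\frak q)\in\widetilde{\frak P}$ of its tree, which is exactly the left unit of $\red(F(y))$. Lemma (2.1) shows that the portion of the reduced form of $F(y)$ running through vertices of $\frak P^{(1)}_\mathcal R$ — precisely the edges killed by $\red$ — is irrelevant to the context of $y$ in $X$, so equal $\red$-images force $y\approx y'$; and Lemma (2.2) guarantees that inside each tree this level-$1$ portion is reconstructed unambiguously from the endpoints, so points coming from genuinely distinct reduced elements of $\mathcal S_{\widetilde{\mathcal R}}$ have distinct context, whence $y\approx y'$ forces equal $\red$-images. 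This makes $\Phi$ a well-defined injection, and surjectivity follows from surjectivity of $\red$ together with (G2)--(G5), which realise an arbitrary $\widetilde f\in\mathcal S_{\widetilde{\mathcal R}}$ with $\mathbf 1_{\widetilde{\frak q}}\widetilde f\,\mathbf 1_{\widetilde{\frak r}}\neq0$ in the form $\red(F(y))$.

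To finish the first assertion I would check that $\Phi$ respects products. For $u,v\in Y(X)$ with $u$ right asymptotic to $q$ and $v$ left asymptotic to $r$, the recipe defining multiplication in $[Y(X)]_\approx$ produces, when $q\gtrsim r$ and $[u]_\approx[v]_\approx\neq0$, a point $y$ with $F(y)=F(u)\,g\,F(v)$ for a gap element $g$ that the $A_n(X)$-constraint forces to satisfy $\red(g)=\mathbf 1$, whence $\red(F(y))=\red(F(u))\red(F(v))$ since $\red$ is a homomorphism; and when $[u]_\approx[v]_\approx=0$ (either $q\not\gtrsim r$ or no admissible gap exists) one checks that the right and left units of $\red(F(u))$ and $\red(F(v))$ are incompatible or the product collapses, i.e.\ $\red(F(u))\red(F(v))=0$, using again that $\red$ vanishes only at $0$. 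With the bijection of the previous paragraph this gives $[Y(X)]_\approx\cong\mathcal S_{\widetilde{\mathcal R}}(\widetilde{\frak P},\widetilde{\mathcal E}^-,\widetilde{\mathcal E}^+)$ as semigroups with zero, i.e.\ $\mathcal S_{\widetilde{\mathcal R}}(\widetilde{\frak P},\widetilde{\mathcal E}^-,\widetilde{\mathcal E}^+)$ is associated to $X(\frak P,\Sigma,\lambda)$.

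For the presentation assertion I would keep the graph $(\mathcal V,\Sigma)$ and set $\lambda'=\red\circ\lambda$. Since $\red$ preserves the sign decomposition, $\lambda'$ takes values in $\mathcal S^-_{\widetilde{\mathcal R}}\cup\{\mathbf 1_{\widetilde{\frak p}}:\widetilde{\frak p}\in\widetilde{\frak P}\}\cup\mathcal S^+_{\widetilde{\mathcal R}}$, so (G1) holds; since $\red^{-1}(0)=\{0\}$, a path $b$ has $\lambda'(b)\neq0$ iff $\lambda(b)\neq0$, so $(\mathcal V,\Sigma,\lambda')$ presents the same subshift $X(\frak P,\Sigma,\lambda)$. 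One verifies that $\mathcal V(\widetilde{\frak p})$ computed from $\lambda'$ equals $\bigcup_{\frak p'\in\frak P_{\widetilde{\frak p}}}\mathcal V(\frak p')$ computed from $\lambda$; as $\{\frak P_{\widetilde{\frak p}}:\widetilde{\frak p}\in\widetilde{\frak P}\}$ partitions $\frak P$ and (G2)--(G3) hold for $\lambda$, they hold for $\lambda'$; (G4) transfers because $\red$ preserves non-vanishing; and (G5) for $\lambda'$ follows from (G5) for $\lambda$ by lifting each $\widetilde f\in\mathcal S_{\widetilde{\mathcal R}}$, with a choice of endpoints in the relevant trees, to an $f\in\mathcal S_\mathcal R$ with $\red(f)=\widetilde f$. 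Thus $(\mathcal V,\Sigma,\lambda')$ is an $\mathcal S_{\widetilde{\mathcal R}}(\widetilde{\frak P},\widetilde{\mathcal E}^-,\widetilde{\mathcal E}^+)$-presentation of $X$. The step I expect to be the main obstacle is the displayed equivalence $y\approx y'\iff\red(F(y))=\red(F(y'))$: synchronising the collar and tail conditions in the definition of $\approx$ on $Y(X)$ with the reduced normal forms, and applying Lemma (2.2) with enough care that genuinely different elements of $\mathcal S_{\widetilde{\mathcal R}}$ are never identified; together with the verification that $\red$ is a homomorphism, whose case analysis on the relations of $\mathcal S_\mathcal R$ is where Conditions (I) and (II) really enter.
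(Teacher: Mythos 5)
Your proposal is correct and follows essentially the same route as the paper: the paper also builds the surjective quotient homomorphism $\Psi$ (your $\red$) on generators, defines the isomorphism of $[Y(X)]_\approx$ onto $\mathcal S_{\widetilde{\mathcal R}}(\widetilde{\frak P},\widetilde{\mathcal E}^-,\widetilde{\mathcal E}^+)$ by $[y]_\approx\mapsto\Psi(\lambda(y_{[I,J)}))$ citing Theorem 2.3, Lemma 3.1 and Theorem 3.2 (you expand this citation into the argument via Lemmas 2.1 and 2.2, and your reference to ``Lemma (3.2)'' should read Lemma 3.1), and obtains the new presentation as $X(\frak P,\Sigma,\Psi\circ\lambda)$ exactly as you do with $\lambda'=\red\circ\lambda$.
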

\begin{proof}
We choose for $\frak r \in \widehat{\frak R} $ a periodic point 
$y^{\circ } (\frak r )\in X(\frak P,\Sigma, \lambda)$, such that
$$
\lambda( _{[0,\pi(y^{\circ }(\frak r )))} = \bold 1_{\frak r}.
$$
By Lemma (3.1) and by Theorem (3.2)  we can choose  a system of representatives 
$Y^\circ$ of the equivalence relation $\approx$
such that
$
Y^\circ \supset 
\{  y^{\circ } 
(\frak r ): 
 \frak r \in \frak R_{\widehat{\mathcal R}}  \},
$
and such that every point in $Y^\circ$ is left asymptotic to a point in $\{  y^{\circ } 
(\frak r ): 
 \frak r \in \frak R_{\widehat{\mathcal R}}  \}$ and also right asymptotic to a point in
  $\{  y^{\circ } 
(\frak r ): 
 \frak r \in \frak R_{\widehat{\mathcal R}}  \}$.

We set
\begin{align*}
&\varphi( e^- ) = \widehat{[e^-]_{\sim(\mathcal R, -)}},  
\qquad  \qquad\qquad \
 e^- \in\mathcal E^-,
\\
&\varphi( f^- ) = \bold 1_\frak r,\qquad  [f^-]_{\sim(\mathcal R, -)} \in \bar {\mathcal F}^-_{\bar {\mathcal R}}(\frak r),\frak r \in \widehat{\frak R},
\\
&\varphi( \bold 1_\frak p) = \bold 1_\frak r, \qquad \quad \quad\ \  \ \ \ \ \qquad \frak p\in  \frak P_\frak r, \frak r \in \widehat{\frak R},
\\
&\varphi( f^+ ) = \bold 1_\frak r,\qquad  [f^+]_{\sim(\mathcal R, +)} \in \bar {\mathcal F}^+_{\bar {\mathcal R}}(\frak r),\frak r \in \widehat{\frak R},
\\
&\varphi( e^+ ) = \widehat{[e^+]_{\sim(\mathcal R, +)}},  
\qquad  \qquad\qquad \ 
e^+ \in\mathcal E^+.
\end{align*}
and for
$$
g(-) \in \mathcal S^-(G_\mathcal R(\frak P, \mathcal E^-\cup \mathcal E^+) ) , \qquad g(+) \in \mathcal S^+(G_\mathcal R(\frak P, \mathcal E^-\cup \mathcal E^+) )  ,
$$
we set
$$
\varphi( g^- ) = \prod_{1 \leq i (-) \leq \ell(g(-))}\varphi(e_{i(-)} [g(-)]),
\quad
\varphi( g^+ ) = \prod_{1 \leq i (+) \leq \ell(g(+))}\varphi(e_{i(+)} [g(+)] ).
$$
We set
$$
\widehat{\lambda}(\sigma) = \varphi({\lambda}(\sigma)), \qquad \sigma \in \Sigma,
$$
and
\begin{align*}
&J_-(y^\circ) = \max \{ J < 0: y^\circ_{(-\infty, J ]} \in 
\{ y^\circ(\frak r)_{(-\infty, J ]}: \frak r \in \widehat{\frak R} \} \}  ,
\\
&J_-(y^\circ) = \min\thinspace  \{ J > 0: y^\circ_{[J,-\infty)} \in \thinspace 
\{ y^\circ(\frak r)_{[J,-\infty)}: \frak r \in \widehat{\frak R} \} \}, \qquad y^\circ \in Y^\circ.
\end{align*}
An isomorphism $\Psi$ of $\mathcal S(X(  \lambda)$ onto $\mathcal S(\widehat{G}()$ is given by
$$
\Psi([(y^\circ)]_{\approx}) = \prod_{J_-(y^\circ)< j <J_+(y^\circ)}\widehat{\lambda}
(y^\circ_j), \qquad y^\circ \in Y^\circ. \qed
$$
\renewcommand{\qedsymbol}{}
ore precisely, with $I, J \in \Bbb Z, I < J,$ such that 
\end{proof}

\section{Examples}

Let there be given an $\mathcal R$-graph 
$$
G_{\widehat{\mathcal R}} = G_{\widehat{R}}
 ( \{\widehat{ \frak p}\} , \widehat{\mathcal E}^- \cup 
\widehat{ \mathcal E}^+),
$$
such that 
$$
\card(\widehat{\mathcal E}^-) > 1, 
$$
$$
[\widehat{e}^-] _{\sim \widehat{\mathcal R}, - }  = \{ \widehat{e}^- \},  \qquad 
\widehat{e}^- \in \widehat{\mathcal E}^-,
$$
$$
[\widehat{e}^+] _{\sim \widehat{\mathcal R}, -+}  = \{ \widehat{e}^- \},  \qquad 
\widehat{e}^- \in \widehat{\mathcal E}^+.
$$
Let there also be given a subshift $X$ with Property (A) and associated semigroup  $\mathcal S(G_{\widehat{R}})$.
For $\widehat{e}^-\in\widehat{\mathcal E}^-$($\widehat{e}^-\in \widehat{\mathcal E}^-$) we denote by $\Lambda (\widehat{e}^-)$($\Lambda (\widehat{e}^+)$) the minimal length of a periodic point in $X$ with multiplier $\widehat{e}^-$($\widehat{e}^+$), and we denote by 
$P^{(\widehat{e}^-)}_l$   ($P^{(\widehat{e}^+)}_l$) the set of periodic points of $X$ of length $l \geq  \Lambda (\widehat{e}^-) $
($l \geq  \Lambda (\widehat{e}^+)$), with multiplier $\widehat{e}^-$($\widehat{e}^+$). We set 
$$
P^{(-)} =  \bigcup_{l \geq  \Lambda (\widehat{e}^-) }  P^{(\widehat{e}^-)}_l ,   \qquad
P^{(+)} =\bigcup_{l \geq  \Lambda (\widehat{e}^+) }  P^{(\widehat{e}^+)}_l .
$$
\subsection{One-vertex $\mathcal R$-graphs}
\begin{theorem}
For $\mathcal R$-graphs
$$
G_{\mathcal R} = G_{\mathcal R}
 ( \{{ \frak p}\} , {\mathcal E}^- \cup 
{ \mathcal E}^+),
$$
such that 
$$
{ \frak p}  \notin \frak P^{(1)}_{{{\mathcal R}}}, \qquad
\mathcal E^-_\mathcal R( \frak p , \frak p ) =\mathcal E^+_\mathcal R( \frak p , \frak p ) = \emptyset,
$$
the topological conjugacy class 
of the $\mathcal R$-graph shift of $G_{\mathcal R}$ 
determines the isomorphism class of the $\mathcal R$-graph  
$G_{\mathcal R}$.
\end{theorem}
\begin{proof}
We  define a relation 
$
\mathcal Q\subset P_{1}^{(-)}\times P_{1}^{(+)}
$
that contains the pairs 
$
( p(-) ,p(+)   ) \in P_{1}^{(-)}\times P_{1}^{(+)},
$
such that the subshift $X(G_{\mathcal R})$ has a point, that is left asymptotic to $ p(-) $ and right asymptotic to $ p(+) $.

To obtain  mappings
$$
\widehat{\nu}_{-}: \widehat{\mathcal E}^- \to \Bbb N, \  \ \widehat{\nu}_{+}: 
\widehat{\mathcal E}^+ \to \Bbb N, 
$$
such that one has for 
$$
p(-) \in P^{(\widehat{e}^-)}, p(+) \in P^{(\widehat{e}^+)}, \qquad
( \widehat{e}^- ,  \widehat{e}^+ ) \in \widehat{\mathcal R},
$$
that 
\begin{align*}
&\card(\{\widetilde{p}(+) \in P_1^{(\widehat{e}^-)}(X): (p(-) ,\widetilde{p}(+) ) 
\in  \mathcal Q   \}) =
\widehat{\nu}_{-}(\widehat{e}^-), \tag{4.1}
\\
&\card(\{\widetilde{p}(-) \in P_1^{(\widehat{e}^-)}(X): (\widetilde{p}(-) ,p(+) )
 \in  \mathcal Q   \}) =
\widehat{\nu}_{+}(\widehat{e}^+),
\end{align*}
set
$$
\widehat{\nu}_{-}(\widehat{e}^-)  = \card(\widehat{e}^-), \qquad \widehat{e}^-\in 
\widehat{\mathcal E}^-,
$$
$$
\widehat{\nu}_{+}(\widehat{e}^+)  = \card(\widehat{e}^+). \qquad \widehat{e}^+ \in 
\widehat{\mathcal E}^+.
$$
The existence of mappings
$$
\widehat{\nu}_{-}: \widehat{\mathcal E}^- \to \Bbb N, \  \ \widehat{\nu}_{+}: 
\widehat{\mathcal E}^+ \to \Bbb N, 
$$
such that (4.1) holds is an invariant of topological conjugacy. Given the mappings 
$$
\widehat{\nu}_{-}: \widehat{\mathcal E}^- \to \Bbb N, \  \ \widehat{\nu}_{+}: 
\widehat{\mathcal E}^+ \to \Bbb N,
$$
such that (4.1) holds, together with
the  $\mathcal R$-graph
$$
G_{\widehat{R}} = G_{\widehat{R}}
 ( \{ \frak p\} , [\mathcal E^-]_{\sim(\mathcal R, -)} \cup 
[\mathcal E^+]_{\sim(\mathcal R, +)}),
$$
that is derived from  $G_{\mathcal R}$,
one can reconstruct  the  $\mathcal R$-graph $G_{\mathcal R}$ from the relation
 $\widehat{R}$. 
\end{proof}

%section5
\subsection{A class of Examples of $\mathcal R$-graph shifts, to which the $\mathcal R$-graph semigroups of one-vertex $ \mathcal R$-graphs are associated.}

Let $K \in \Bbb N.$ We set
\begin{align*}
&\widehat{\mathcal E}^-_l(X) = \{\widehat{e}^- \in\widehat{ \mathcal E}^-: 
\Lambda(\widehat{e}^-) =  l + 1  \},
\\
&\widehat{\mathcal E}^+_l(X) = \{\widehat{e}^+ \in\widehat{ \mathcal E}^+: 
\Lambda(\widehat{e}^+) =  l + 1  \}, \quad 0 < l \leq K,
\end{align*}
and
$$
\widehat{\mathcal R}_l(X)= \widehat{\mathcal R}
\cap( \widehat{\mathcal E}^-_l(X) \times\widehat{\mathcal E}^+_l(X) ), \quad 0 < l \leq K.
$$
We also define relations $$
\mathcal Q_l(X) \subset P_{l+1}^{(-)}(X)\times P_{l+1}^{(+)}(X), \quad  0< l \leq K,
$$
$\mathcal Q_l(X)$ containing the pairs 
$$
( p(-) ,p(+)   ) \in P_{l+1}^{(-)}(X)\times P_{l+1}^{(+)}(X),
$$
such that the subshift $X$ has a point, that is left asymptotic to $ p(-) $ and right asymptotic to $ p(+) $.

We denote by $\mathcal H(G_{\widehat{R}} )$ the family of subshifts $X$ with Property (A) and associated semigroup 
$\mathcal S(G_{\widehat{R}})$, that satisfy the following conditions $(c1 - 4)$,
that are designed to be invariant under topological conjugacy:

\noindent
\begin{align*}
\widehat{\mathcal E}^-  =\prod_{0< l \leq K}\widehat{\mathcal E}^-_l(X), \qquad
 \widehat{\mathcal E}^+ = \prod_{0< l \leq K}\widehat{\mathcal E}^+_l(X).
\tag {c1} 
\end{align*}

\begin{align*}
\widehat{\mathcal R}  =\bigcup_{0< l \leq K}\widehat{\mathcal R}_l(X).
\tag {c2} 
\end{align*}

\noindent
(c3) There are $\Xi_{K,l}\in \Bbb N, 0< l \leq K, $ such that
\begin{align*}
\Xi_l = \card(P^{(e^-)}_{l+3})  =
\card(P^{(e^+)}_{l+3})   , \quad \
\widehat{e}^-\in\widehat{\mathcal E}^-, \widehat{e}^-\in\widehat{\mathcal E}^-,\ 0< l \leq K.
\end{align*}

\noindent
(c3) There are mappings
$$
\widehat{\nu}_{-, l}: \widehat{\mathcal E}^-_l \to \Bbb N, \  \ \widehat{\nu}_{+, l}: 
\widehat{\mathcal E}^+_{l} \to \Bbb N,  \qquad 0< l \leq K.
$$
such that it holds for 
$$
p(-) \in P_l^{(\widehat{e}^-)}(X), p(+) \in P_l^{(\widehat{e}^+)}(X), \qquad
( \widehat{e}^- ,  \widehat{e}^+ ) \in \mathcal R_l(X)  (X),
$$
that 
\begin{align*}
&\card(\{\widetilde{p}(+) \in P_l^{(\widehat{e}^-)}(X): (p(-) ,\widetilde{p}(+) ) \in  \mathcal Q_l(X)   \}) =
\widehat{\nu}_{-, l}(\widehat{e}^-),
\\
&\card(\{\widetilde{p}(-) \in P_l^{(\widehat{e}^-)}(X): (\widetilde{p}(-) ,p(+) ) \in  \mathcal Q_l(X)   \}) =
\widehat{\nu}_{+, l}(\widehat{e}^+). 
\end{align*}

Let $K \in \Bbb N$, and let there be given an $\mathcal R$-graph
$$
G_{\mathcal R}= G_{\mathcal R}(\{\frak p_{l}:  0 \leq l \leq K  \}, 
\mathcal E^- \cup \mathcal E^-),
$$
such that
\begin{align*}
\mathcal E^-(\frak p_{l} , \frak p_{0} ) \neq \emptyset, \quad
\mathcal E^-( \frak p_{l-1} , \frak p_{l} ) \neq \emptyset,  \qquad
 0 \leq l \leq K \tag {4.2},
\end{align*}
\begin{align*}
\mathcal E^-  = \bigcup_{0 \leq l \leq K} (\mathcal E^-(\frak p_{l} , \frak p_{0} ) \cup
 \mathcal E^-( \frak p_{l-1} , \frak p_{l} ) ),              \tag{4.3}
\end{align*}

\begin{align*}
\frak p_{l} \in \frak P^{(1)}_\mathcal R, \qquad  0 \leq l \leq K, \tag{4.4}
\end{align*}
In the case that  $K = 1$, also assume, that 
\begin{align*}
\frak p_{0} \in  \frak P^{(1)} \setminus \frak P^{(1)}_\mathcal R.
\end{align*}

Let $G_{\widehat{R}} = G_{\widehat{R}}
 ( \{\widehat{ \frak p}, \widehat{\mathcal E}^- \cup 
\widehat{ \mathcal E}^+ \}    )$ 
be the $\widehat{R}$-graph, that is derived from $G_\mathcal R $.
We set
$$
\widehat{R}_l = \widehat{R} \cap 
( [ \mathcal E^-(\frak p_{l} , \frak p_{0}  )]_{\sim(\mathcal R, +)}     \times 
  [ \mathcal E^+(\frak p_{l} , \frak p_{0}  )]_{\sim(\mathcal R, -)}    ), \quad   0 \leq l \leq K ,
$$
and we introduce a condition (D) on the $\mathcal R$-graph $G_{\mathcal R}$, that consists of two parts, $(Da)$ and $(Db)$:

\noindent
\begin{align*}
gcd( \card(\mathcal E^-(\frak p_{l-1} , \frak p_{l} ) \card(\mathcal E^+(\frak p_{l-1} ,
 \frak p_{l} )), \sum_{(\widehat{e}^-,\widehat{e}^+) \in 
  \widehat{R}_l}\card(\widehat{e}^-)\card(\widehat{e}^+))=1, \tag{Da}
 \end{align*}
 \begin{multline*}
  gcd\{\card(\widehat{e}^-): \widehat{e}^- \in   
  [ \mathcal E^-(\frak p_{l} , \frak p_{0}  )]_{\sim(\mathcal R, -)} \} =  \tag{Db}
  \\ 
 gcd\{\card(\widehat{e}^-): \widehat{e}^- \in 
    [ \mathcal E^+(\frak p_{l} , \frak p_{0}  )]_{\sim(\mathcal R, +)} \} =1, \quad
     0 \leq l \leq K.
\end{multline*}

\begin{theorem}
Let $K \in \Bbb N$, and let 
$$
G_{\mathcal R}= G_{\mathcal R}(\{\frak p_{l}:  0 \leq l \leq K  \}, 
\mathcal E^- \cup \mathcal E^-),
$$
be  an $\mathcal R$-graph,
that satisfies (4.2), (4.3) and (4.4) 
and such that, in the case that  $K = 1$, also 
$
\frak p_{0} \in  \frak P^{(1)} \setminus \frak P^{(1)}_\mathcal R.
$

Condition $(D)$ is an invariant of topological conjugacy of the $\mathcal R$-graph shift of 
$G_{\mathcal R}$. If the $\mathcal R$-graph shift of 
$G_{\mathcal R}$ satisfies Condition (D), then its topological conjugacy class determines the isomorphism class of the $\mathcal R$-graph  
$G_{\mathcal R}$.
\end{theorem}
 
\begin{proof}
Let $G_{\widehat{R}} = G_{\widehat{R}}
 ( \{\widehat{ \frak p}\}, \widehat{\mathcal E}^- \cup 
\widehat{ \mathcal E}^+ )$ 
be the $\widehat{R}$-graph, that is derived from
$G_{\mathcal R}.$ The $\mathcal R$-graph shift of $G_{\mathcal R}$ belongs to the family 
$\mathcal H_K(G_{\widehat{R}})  $: By construction  $(c1)$ and $(c2)$ are satisfied. One has, that
\begin{align*}
&\widehat{\mathcal E}^-_{l}(X(G_{\mathcal R})) =  
 [ \mathcal E^-(\frak p_{l} , \frak p_{0}  )]_{\sim(\mathcal R, +)} ,
 \ \ \
\widehat{\mathcal E}^+_{l}(X(G_{\mathcal R})) = 
 [ \mathcal E^-(\frak p_{l} , \frak p_{0}  )]_{\sim(\mathcal R, -)}, 
\\
&\widehat{\mathcal R}_{l}(X(G_{\mathcal R})) = \widehat{\mathcal R}
 \cap ( \widehat{\mathcal E}^-_{l}(X(G_{\mathcal R}))   \times 
  \widehat{\mathcal E}^+_{l}(X(G_{\mathcal R}))  ), \qquad \qquad \ 0 \leq l \leq K.
\end{align*}
Condition (c3) is also satisfied: One has that
\begin{multline*}
\Xi_{K, l}(X(G_{\mathcal R})) = \tag{4.5}
\\
\sum_{0< m \leq l}( \card (\mathcal E^-(\frak p_{m-1}  ,  \frak p_{m} ))
\card(\mathcal E^+(\frak p_{m-1}  ,  \frak p_{m} )) +
\sum_{(  \widehat{e}^-,  \widehat{e}^+)\in \widehat{\mathcal R} _m}
\card(\widehat{e}^-) 
\card (\widehat{e}^+)),
\\
0 < l \leq K.
\end{multline*}
The Condition (c4) is also satisfied. One has, that
\begin{align*}
&\nu_{-, l}^{X(G_{\mathcal R})}(\widehat{e}^-) = \card (\widehat{e}^-) \prod_{0 < m \leq l}\card(\mathcal E^-(\frak p_{m-1}  ,  \frak p_{m} )), \quad \widehat{e}^- \in \mathcal E^-_l,
\\
&\nu_{-, l}^{X(G_{\mathcal R})}(\widehat{e}^+) = \card (\widehat{e}^+) \prod_{0 < m \leq l}\card(\mathcal E^+(\frak p_{m-1}  ,  \frak p_{m} )), \quad \widehat{e}^+ \in \mathcal E^+_l, \qquad 0 < l \leq K.
\end{align*}
We set
$$
\rho_{-, l}^{X(G_{\mathcal R})} = gcd \{ \nu_{-, l}^{X(G_{\mathcal R})}(\widehat{e}^- ): 
\widehat{e}^- \in \mathcal E^-_l (X(G_{\mathcal R})) \},
$$
$$
\rho_{+, l}^{X(G_{\mathcal R})}= gcd \{ \nu_{+, l}^{X(G_{\mathcal R})}(\widehat{e}^+ ): 
\widehat{e}^+ \in \mathcal E^+_l (X(G_{\mathcal R})) \}.
$$

The proof of the theorem is by induction in $K$ steps. For the $l$-th  step of the induction we split Condition $(D)$ into conditions $(Dal)$ and $(Dbl)$:
\begin{multline*}
(Dal) \  \ \ \ gcd( \card(\mathcal E^-(\frak p_{l-1} , \frak p_{l} ) \card(\mathcal E^+(\frak p_{l-1} ,
 \frak p_{l} )), \sum_{(\widehat{e}^-,\widehat{e}^+) \in 
  \widehat{R}_l}\card(\widehat{e}^-)\card(\widehat{e}^+))=1, 
\\
(Dbl)   \quad gcd\{\widehat{e}^-: \widehat{e}^- \in   
  [ \mathcal E^-(\frak p_{l} , \frak p_{0}  )]_{\sim(\mathcal R, -)}        \} =   
 gcd\{\widehat{e}^+: \widehat{e}^+ \in 
    [ \mathcal E^-(\frak p_{l} , \frak p_{0}  )]_{\sim(\mathcal R, -)} \} =1, 
    \\
    0 < l \leq K.
\end{multline*}

At the $l$-th step of the induction the cardinalities of the sets
\begin{align*}
\mathcal E^-(\frak p_{m-1} , \frak p_{m} ),   \ 
\mathcal E^+(\frak p_{m-1} , \frak p_{m} ),
\quad
\mathcal E^-(\frak p_m , \frak p_{0}  ),  
 \ \mathcal E^+(\frak p_{m} , \frak p_{0}  ), \qquad  0 < m < l, 
\end{align*}
are known.
In the $l$-th step of the induction it is shown, that the validity of $(Dal)$ is determined by the topological conjugacy class of $X(G_{\mathcal R})$. Under the assumption, that 
$(Dal)$ holds, it is then also shown in the $l$-th step of the induction, that  the validity of 
$(Dbl)$ is determined by the topological conjugacy class of $X(G_{\mathcal R})$. Under the assumption, that $(Dal)$ and $(Dbl)$ hold, it is then shown, in the $l$-th step of the induction, how the cardinality of the edge sets
\begin{align*}
\mathcal E^-(\frak p_{l-1} , \frak p_{l} ),   \ 
\mathcal E^+(\frak p_{l-1} , \frak p_{} ),
\quad
\mathcal E^-(\frak p_l , \frak p_{0}  ),  
 \ \mathcal E^+(\frak p_{l} , \frak p_{0}  ), 
\end{align*}
can be obtained from topological conjugacy invariants of $X(G_{\mathcal R})$.

We describe for $l \in [1, K]$ the $l$-th step of the induction. One has, that
\begin{align*}
&\card(\mathcal Q_l(X(G_{\mathcal R}))) =  \tag{4.6}
\\
&( \prod_{0 < m \leq l}
 (\card(\mathcal E^-(\frak p_{m-1},\frak p_m)\card(\mathcal E^+(\frak p_{m-1},\frak p_m))
(\sum_{(\widehat{e}^-,  \widehat{e}^+)\in \widehat{\mathcal R} _l}
\card( \widehat{e}^-)\card( \widehat{e}^+))>
\\
&
\rho_{-,l}X(G_{\mathcal R}))\rho_{-,l}X(G_{\mathcal R}))\card( \widehat{\mathcal R} _l) >
\rho_{-,l}X(G_{\mathcal R}))\rho_{-,l}X(G_{\mathcal R}).
\end{align*}
Set $\Xi_0(X(G_{\mathcal R})  = 0$, and set
\begin{align*}
&\alpha_l^{X(G_{\mathcal R})} = \Xi_l (X(G_{\mathcal R}))) -  \Xi_{l-1}(X(G_{\mathcal R}))),
\\
\beta_l^{X(G_{\mathcal R})}=& \frac{\card(\mathcal Q_l(X(G_{\mathcal R})))}
{ \prod_{0 < m \leq l}
 (\card(\mathcal E^-(\frak p_{m-1},\frak p_m)\card(\mathcal E^+(\frak p_{m-1},\frak p_m))}, 
\quad  0 < l \leq K.
\end{align*}
From (4.5) one has, that
\begin{align*}
\alpha_l^{X(G_{\mathcal R})} = \card(\mathcal E^-(\frak p_{l-1},\frak p_l)
\card(\mathcal E^+(\frak p_{l-1},\frak p_l)) +
\sum_{(\widehat{e}^-,  \widehat{e}^+)\in \widehat{\mathcal R} _l}\card( \widehat{e}^-)\card( \widehat{e}^+),\tag{4.7}
\end{align*}
and from (a) one has, that
\begin{align*}
\beta_l^{X(G_{\mathcal R})} =\card(\mathcal E^-(\frak p_{l-1},\frak p_l)
\card(\mathcal E^+(\frak p_{l-1},\frak p_l)) 
\sum_{(\widehat{e}^-,  \widehat{e}^+)\in \widehat{\mathcal R} _l}\card( \widehat{e}^-)\card( \widehat{e}^+). \tag{4.8}
\end{align*}
Set
$$
\Delta_l^{X(G_{\mathcal R})} = (\alpha_l^{X(G_{\mathcal R})})^2 - 
4\beta_l^{X(G_{\mathcal R})}.
$$
It follows from (4.7) and (4.8), that
\begin{multline*}
\card(\mathcal E^-(\frak p_{l-1},\frak p_l)
\card(\mathcal E^+(\frak p_{l-1},\frak p_l)) \in 
\\
\{\tfrac{1}{2}(
 \alpha_l^{X(G_{\mathcal R})} -\sqrt{\Delta_l^{X(G_{\mathcal R})}} )
 , \tfrac{1}{2}(
  \alpha_l^{X(G_{\mathcal R})} +\sqrt{\Delta_l^{X(G_{\mathcal R})}} ) \}.
\end{multline*}
At this stage the cardinalities of the sets 
$$
\mathcal E^-(\frak p_{m-1},\frak p_m),\mathcal E^+(\frak p_{m-1},\frak p_m),\qquad 0 < m <l,
$$
are known, and are also known to be invariants of topological conjugacy. This means that   $\beta_l^{X(G_{\mathcal R})}$ is also an invariant of topological conjugacy, as is the set of roots
$$
\{\tfrac{1}{2}(
 \alpha_l^{X(G_{\mathcal R})} -\sqrt{\Delta_l^{X(G_{\mathcal R})}} )
 , \tfrac{1}{2}(
  \alpha_l^{X(G_{\mathcal R})} +\sqrt{\Delta_l^{X(G_{\mathcal R})}} ) \}.
$$
It follows, that the validity of Condition $(Dal)$ is an invariant of topological conjugacy.

Under the assumption, that $(Dal)$ holds, one has, that either 
$ \alpha_l^{X(G_{\mathcal R})} -\sqrt{\Delta_l^{X(G_{\mathcal R})}}$
or 
$\alpha_l^{X(G_{\mathcal R})} +\sqrt{\Delta_l^{X(G_{\mathcal R})}}$
is a divisor of 
$\rho_{-,l}^{X(G_{\mathcal R})}
\rho_{+,l}X(G_{\mathcal R}))$,
since by (4.6) 
$$
\frac{\rho_{-,l}^{X(G_{\mathcal R})}
\rho_{+,l}^{X(G_{\mathcal R})}}
{\card(\mathcal Q_l^{X(G_{\mathcal R})})} <1.
$$
Therefore one has, under the assumption, that $(Dal)$ holds, that
\begin{multline*}
\card(\mathcal E^-(\frak p_{l-1},\frak p_l)
\card(\mathcal E^+(\frak p_{l-1},\frak p_l)) = 
\\
\begin{cases}
 \alpha^{X(G_{\mathcal R})}_{l}-\sqrt{\Delta^{X(G_{\mathcal R})}_l}, 
 &\text{if  
 $  \alpha^{X(G_{\mathcal R})}_{l}-\sqrt{\Delta^{X(G_{\mathcal R})}_l} 
 \mid\rho_{-,l}^{X(G_{\mathcal R})}
\rho_{+,l}^{X(G_{\mathcal R})}$}, 
\\
  \alpha^{X(G_{\mathcal R})}_{l}-\sqrt{\Delta^{X(G_{\mathcal R})}_l}, 
 &\text {if  $ \alpha^{X(G_{\mathcal R})}_{l}-\sqrt{\Delta^{X(G_{\mathcal R})}_l} 
 \mid\rho_{-,l}^{X(G_{\mathcal R})}
\rho_{+,l}^{X(G_{\mathcal R})}$}.
\end{cases} \tag{4.9}
\end{multline*}
Condition $(Dbl)$ is equivalent to
$$
\rho_{-,l}^{X(G_{\mathcal R})}
\rho_{+,l}^{X(G_{\mathcal R})} = \prod_{0< m \leq l}[\card(\mathcal E^-(\frak p_{l-1},\frak p_l)
\card(\mathcal E^+(\frak p_{l-1},\frak p_l))].
$$ 
From this and from (4.9) it is seen, that under the assumption, that Condition $(Dal)$ holds, the validity of Condition $(Dbl)$ is an invariant under topological conjugacy. From Condition $(Dal)$ together with Condition $(Dbl)$ it follows that
$$
\card (\mathcal E^-(\frak p_{l-1},\frak p_{l}) =  \frac{\rho_{-,l}^{X(G_{\mathcal R})}}
{\prod_{0 < m <l} \card(\mathcal E^-(\frak p_{m-1},\frak p_{m})}, \ 
$$
$$
\card (\mathcal E^+(\frak p_{l-1},\frak p_{l}) =  \frac{\rho_{+,l}^{X(G_{\mathcal R})}}
{\prod_{0 < m <l} \card(\mathcal E^+(\frak p_{m-1},\frak p_{m})},
$$ 
and
$$
\card (\widehat{e}^-) =  \frac{\nu_{-,l}^{X(G_{\mathcal R})}}
{\prod_{0 < m <l} \card(\mathcal E^-(\frak p_{m-1},\frak p_{m})},  \qquad  \widehat{e}^-\in
\widehat{\mathcal E}^-, 
$$
$$
\card (\widehat{e}^+)  =  \frac{\nu_{+,l}^{X(G_{\mathcal R})}}
{\prod_{0 < m <l} \card(\mathcal E^+(\frak p_{m-1},\frak p_{m})}, \qquad  \widehat{e}^+\in
\widehat{\mathcal E}^+, 
$$ 
and the relation $\mathcal R_l$ can be reconstructed from
$$
( \card(\widehat{e}^+)  )_{\widehat{e}^+ \in \widehat{\mathcal E}_l^-},   \quad 
( \card(\widehat{e}^-)  )_{\widehat{e}^- \in \widehat{\mathcal E}_l^-},
$$
and from the relation $\widehat{\mathcal R}_l$.
\end{proof}

%\pagebreak

\medskip

\par\noindent Toshihiro Hamachi
\par\noindent  Faculty of Mathematics
\par\noindent Kyushu University
\par\noindent 744 Motooka, 
 Nishi-ku,
Fukuoka 819-0395 
\par\noindent Japan
\par\noindent t.hamachi.796@m.kyushu-u.ac.jp
 
\bigskip

\par\noindent Wolfgang Krieger
\par\noindent Institute for Applied Mathematics 
\par\noindent  University of Heidelberg
\par\noindent Im Neuenheimer Feld 205, 
 69120 Heidelberg
 \par\noindent Germany
\par\noindent krieger@math.uni-heidelberg.de


\begin{thebibliography}{99999}

\bibitem [AH]{AH}
{\sc C. J. Ash, T. E.  Hall},
{\it Inverse semigroups on graphs},
Semigroup Forum 
{\bf 11}
(1975),
140--145.

\bibitem [BBD1]{BBD1}
{\sc M. - P.~B\'eal, M.~Blockelet, C.~Dima},
{\it Finite-type-Dyck shift spaces},
arXiv:1311.4223 (2013)

\bibitem [BBD2]{BBD2}
{\sc M. - P.~B\'eal, M.~Blockelet, C.~Dima},
{\it Sofic-Dyck shifts},
MFCS 2014, Part I, LNCS 8634, 63 - 74, (2014)

\bibitem [BBD3]{BBD3}
{\sc M. - P.~B\'eal, M.~Blockelet, C.~Dima},
{\it Sofic-Dyck shifts},
Theoretical Computer Science
{\bf 609}
(2016),
226 -- 244.

\bibitem [CS]{CS}
{\sc A.~Costa, B.~Steinberg},
{\it A categorical invariant of flow equivalence of shifts},
Ergod. Th. \& Dynam. Sys.
{\bf 36}
(2016),
470 -- 513.

\bibitem [CK]{CK}
{\sc J.~Cuntz, W.~Krieger},
{\it A class of $C^*$-algebras and topological Markov chains},
Inventiones Math.
{\bf 56}
(1980),
251 -- 268.

\bibitem[HI] {HI}
{\sc T.~Hamachi, K.~Inoue},
{\it Embeddings of shifts of finite type into the Dyck shift},
Monatsh. Math.
{\bf  145}
(2005),
107 -- 129.             

\bibitem[HIK] {HIK}
{\sc T.~Hamachi, K.~Inoue, W.~Krieger},
{\it Subsystems of finite type and semigroup invariants of subshifts},
J. Reine Angew. Math.
{\bf  632}
(2009),
37 -- 61.         

\bibitem[HK1] {HK1}
{\sc T.~Hamachi, W.~Krieger},
{\it On certain subshifts and their associated monoids},
Ergod. Th. \& Dynam. Sys.
{\bf  36} 
(2016),
96 -- 107    

\bibitem[HK2] {HK2}
{\sc T.~Hamachi, W.~Krieger},
{\it Families of directed graphs  and topological conjugacy of the associated Markov-Dyck shifts},
arXiv:1806.09051 (2018)
                    
\bibitem[Ki]{Ki}{\sc B.~P.~Kitchens},
{\it Symbolic dynamics}, Springer, Berlin, Heidelberg, New York
(1998),


\bibitem[Kr1]{Kr1}
{\sc  W.~Krieger},
{\it  On the uniqueness of the equilibrium state},
 Math. Systems Theory
  {\bf  8}
(1974),
97 -- 104

\bibitem[Kr2]{Kr2}
{\sc  W.~Krieger},
{\it  On a syntactically defined invariant of symbolic dynamics},
Ergod. Th. \& Dynam. Sys.
  {\bf  20}
(2000),
501 -- 516

\bibitem[Kr3]{Kr3}
{\sc  W.~Krieger},
{\it  On flow-equivalence of $\mathcal R$-graph shifts},
M\"unster J. of Math. 
{\bf 8}
(2015)
229 -- 239.

\bibitem[Kr4]{Kr4}
{\sc  W.~Krieger},
{\it  On subshift presentations},
Ergod. Th. \& Dynam. Sys. 
{\bf 37}
(2017)
1253 -- 1290.
 
\bibitem[KM1]{KM1}
{\sc  W.~Krieger, K.~Matsumoto},
{\it  A notion of synchronization of symbolic dynamics and a class of C*-algebras},
 Acta Appl. Math.
 {\bf 126}
(2013),
263 -- 275.

 \bibitem[KM2]{KM2}
{\sc  W.~Krieger, K.~Matsumoto},
{\it  Markov-Dyck shifts, neutral periodic points and topological conjugacy},
Discrete and Continuous Dynamical systems
 {\bf 39} 
(2019),
1 -- 18

 \bibitem[L]{L}
 {\sc M.~V.~Lawson},
{\it Inverse semigroups},
 World Scientific, Singapore, New Jersey, London and Hong Kong
(1998).

\bibitem[LM]{LM}{\sc D.~Lind and B.~Marcus},
{\it An introduction to symbolic dynamics and coding},
 Cambridge University Press, Cambridge
(1995).

\bibitem [M]{M}
{\sc K.~Matsumoto},
{\it  C*-algebras arising from Dyck systems of topological Markov chains},
Math. Scand. 
{\bf  109}
(2011),
 31 -- 54.
 
\bibitem [NP]{NP}
{\sc M.~Nivat, J.-F.~Perrot},
{\it Une g\'en\'eralisation du monoide bicyclique},
C. R. Acad. Sc. Paris, Ser. A.
{\bf  271}
(1970),
824 -- 827.

\end{thebibliography}
 \end{document}